\providecommand\@dotsep{5}\def\listtodoname{List of Todos}\def\listoftodos{\hypersetup{linkcolor=black}\@starttoc{tdo}\listtodoname\hypersetup{linkcolor=blue}}\makeatother
\newtheorem{thm}{Theorem}
\newtheorem{lem}{Lemma}
\newtheorem{prop}{Proposition}
\newtheorem{assumption}{Assumption}
\newtheorem{rmk}{Remark}
\numberwithin{equation}{section}
\newcommand{\bel}{\begin{equation} \label}
\newcommand{\ee}{\end{equation}}
\def\beq{\begin{equation}}
\def\eeq{\end{equation}}
\newcommand{\bea}{\begin{eqnarray}}
\newcommand{\eea}{\end{eqnarray}}
\newcommand{\beas}{\begin{eqnarray*}}
\newcommand{\eeas}{\end{eqnarray*}}
\newcommand{\R}{\mathbb{R}}
\newcommand{\N}{\mathbb{N}}
\newcommand{\cO}{\mathcal{O}}
\newcommand{\M}{\mathcal{M}}
\newcommand{\grad}{\mathrm{grad}\,}  
\newcommand{\spgrad}{\nabla_{t,x}}
\newcommand{\aform}[1]{a( #1 )}
\newcommand{\ahform}[1]{a_h( #1 )}
\def\epsilon{\varepsilon}
\def\phi {\varphi}
\def\I{\,|\,}
\DeclareMathOperator{\dis}{dist}
\def\p{\partial}
\renewcommand{\leq}{\leqslant}
\renewcommand{\geq}{\geqslant}
\newcommand{\setword}[2]{%
  \phantomsection
  #1\def\@currentlabel{\unexpanded{#1}}\label{#2}%
}
\def\R{\mathbb R}
\def\N{\mathbb N}
\def\p{\partial}
\def\grad{\nabla}
\newcommand{\jump}[1]{\llbracket#1\rrbracket}
\newcommand{\tnorm}[1]{\vert\hspace{-0.3mm}\Vert#1\Vert\hspace{-0.3mm}\vert}
\title[Finite element unique continuation for the wave equation]{Space time stabilized finite element methods for a
 unique continuation problem subject to the wave equation}
\author[Burman]{Erik Burman}
\address{Department of Mathematics, University College London, Gower Street, London UK, WC1E 6BT.}
\email{e.burman@ucl.ac.uk}
\thanks{EB acknowledges funding by EPSRC grants EP/P01576X/1}
\author[Feizmohammadi]{Ali Feizmohammadi}
\address{Department of Mathematics, University College London, Gower Street, London UK, WC1E 6BT.}
\email{a.feizmohammadi@ucl.ac.uk}
\thanks{AF acknowledges funding by EPSRC grant EP/P01593X/1}
\author[M\"{u}nch]{Arnaud M\"unch}
\address{Laboratoire de Math\'ematiques Blaise Pascal, Universit\'e Clermont Auvergne, UMR CNRS 6620, Campus des C\'ezeaux,  63177~Aubi\`ere, France}
\email{arnaud.munch@uca.fr}
\author[Oksanen]{Lauri Oksanen}
\address{Department of Mathematics, University College London, Gower Street, London UK, WC1E 6BT.}
\email{l.oksanen@ucl.ac.uk}
\thanks{LO acknowledges funding by EPSRC grants EP/P01593X/1 and EP/R002207/1}
\keywords{Unique continuation, Data assimilation, Wave equation, Finite element method, Geometric control condition, Observability estimate}
\begin{document}
\begin{abstract}
We consider a stabilized finite element method based on a spacetime
formulation, where the equations are solved on a global (unstructured)
spacetime mesh. A unique continuation problem for the wave
equation is considered, where data is known in an interior subset of spacetime. For this problem, we consider a primal-dual discrete formulation of the continuum problem with the addition of stabilization terms that are designed with the goal of minimizing the numerical errors. We  prove error estimates using the stability
properties of the numerical scheme and a continuum observability
estimate, based on the sharp geometric control condition by Bardos, Lebeau and Rauch. The order of convergence for our numerical scheme is optimal with respect to
stability properties of the continuum problem and the interpolation errors of approximating with polynomial spaces. Numerical examples are provided that illustrate the methodology.
\end{abstract}
\maketitle


\section{Introduction}
We consider a data assimilation problem for the acoustic wave equation, formulated as follows. 
Let $n \in \{1,2,3\}$, $T>0$ and $\Omega \subset \R^n$ be an open, connected, bounded set with smooth boundary $\p \Omega$. Let $u$ be the solution of the initial boundary value problem
\bel{pf}
\begin{aligned}
\begin{cases}
\Box u=\partial_t^2 u - \Delta u = 0, 
&\text{on $\M=(0,T) \times \Omega$},
\\
u =0,
&\text{on $\Sigma=(0,T) \times \p \Omega$},
\\
u|_{t=0} = u_0,\ \p_t u|_{t=0} = u_1
&\text{on $\Omega$}.
\end{cases}
    \end{aligned}
\ee
The initial data $u_0, u_1$ are assumed to be a priori unknown
functions, but 
the measurements of $u$ in some
spacetime subset $\cO=(0,T)\times \omega$, where $\omega \subset
\overline{\Omega}$ is open, is assumed to be known:
\bel{data_omega}
  \begin{aligned}
u|_{\cO}=u_\cO.
    \end{aligned}
\ee

\noindent The data assimilation problem then reads as follows:\\

\noindent \setword{(DA)}{Word:(DA)} Find $u$ given $u_\cO$.\\

The existence of a solution to the \ref{Word:(DA)} problem is always implicitly guaranteed in the sense that the measurements $u_{\cO}$ correspond to a physical solution to the wave equation \eqref{pf}. On the other hand, assuming that 
\bel{uniqueness_T}
    \begin{aligned}
T > 2 \max \{\dis(x,\omega) \I x \in \overline \Omega \},
    \end{aligned}
\ee
it follows from Holmgren's unique continuation theorem that the solution to \ref{Word:(DA)} is unique. Although uniquely solvable, \ref{Word:(DA)} might have poor stability properties if only (\ref{uniqueness_T}) is assumed. We will require the \ref{Word:(DA)} problem to be Lipschitz stable, and for this reason we make the stronger assumption that the so-called {\em geometric control condition} holds. This condition originates from \cite{BLR,BLRII} and we refer the reader to these works for the precise definition. Roughly speaking, the condition requires that all geometric optic rays in $\mathcal M$, taking into account their reflections at boundary, intersect the set $(0,T)\times \omega$.  

We recall the following formulation of the observability estimates appearing in \cite[Theorem 3.3]{BLRII} and \cite[Proposition 1.2]{LLTT17}. For the explicit derivation of this version of the estimate, we refer the reader to \cite[Theorem 2.2]{BFO18}.

\begin{thm} 
\label{continuum}
Let $\omega \subset \overline{\Omega}$, $T>0$ and suppose that $(0,T)\times \omega$
satisfies the geometric control
condition. If $u(0,\cdot) \in
L^2(\Omega)$, $\partial_t u (0,\cdot) \in H^{-1}(\Omega)$, $u|_{(0,T)
  \times \partial \Omega}\in L^2(\Sigma)$, and $\Box u \in H^{-1}(\mathcal{M})$, then $$u \in C^1([0,T];H^{-1}(\Omega))\cap C([0,T];L^2(\Omega)),$$ and
$$\sup_{t \in [0,T]}\biggl(\|u(t,\cdot)\|_{L^2(\Omega)}+\|\p_t u(t,\cdot)\|_{H^{-1}(\Omega)}\biggr)  \leq C\left( \|u\|_{L^2(\cO)} + \|\Box u\|_{H^{-1}(\mathcal{M})}+\|u\|_{L^2(\Sigma)}\right),$$
where $C>0$ is a constant depending on $\M$ and $\omega$.
\end{thm}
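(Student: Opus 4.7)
The strategy is to reduce to the classical interior observability estimate under the geometric control condition by splitting off the inhomogeneous data. The naive obstacle is that the classical Bardos--Lebeau--Rauch estimate is stated for homogeneous-equation, zero-boundary-trace solutions of energy regularity, whereas here we must cope with a source $\Box u\in H^{-1}(\M)$, a boundary trace $u|_\Sigma\in L^2(\Sigma)$, and initial data one notch rougher than the natural energy class.

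First I would decompose $u = w + v$, where
\begin{align*}
\Box w &= 0, \quad w|_\Sigma = 0, \quad w|_{t=0}=u(0,\cdot),\quad \p_t w|_{t=0} = \p_t u(0,\cdot),\\
\Box v &= \Box u, \quad v|_\Sigma = u|_\Sigma, \quad v|_{t=0}=0,\quad \p_t v|_{t=0} = 0.
\end{align*}
The second problem is solved in the transposition sense of Lions--Magenes; together with the hidden regularity of Lasiecka--Lions--Triggiani for the Dirichlet wave equation with $L^2$ boundary data, this yields
\beq
\sup_{t\in[0,T]}\bigl(\|v(t,\cdot)\|_{L^2(\Omega)} + \|\p_t v(t,\cdot)\|_{H^{-1}(\Omega)}\bigr) \leq C\bigl(\|\Box u\|_{H^{-1}(\M)} + \|u\|_{L^2(\Sigma)}\bigr),
\eeq
which by integration in time also controls $\|v\|_{L^2(\cO)}$.

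Next I would apply the interior observability estimate in the dual scale to $w$:
\beq
\sup_{t\in[0,T]}\bigl(\|w(t,\cdot)\|_{L^2(\Omega)} + \|\p_t w(t,\cdot)\|_{H^{-1}(\Omega)}\bigr) \leq C\|w\|_{L^2(\cO)}.
\eeq
Under the geometric control condition this is equivalent, via the Hilbert uniqueness method, to the classical BLR observability estimate in $H^1_0(\Omega)\times L^2(\Omega)$: dualizing the latter produces exact controllability of the adjoint equation from the interior set $\cO$, and a second duality yields the $L^2\times H^{-1}$ observability for the primal equation. Writing $w = u-v$, the triangle inequality combined with the estimate for $v$ from the previous step then gives the result.

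The main obstacle is executing this double duality for the interior observation rigorously, since both the interior (as opposed to Neumann) trace and the rough $H^{-1}$-component of the initial datum require care in the functional-analytic setup of the transposition space. The underlying microlocal input---propagation of singularities for $\Box$ combined with the geometric control condition on the cotangent bundle---is identical to the boundary-observation case, so once the dual observability estimate is recorded (as in \cite[Proposition 1.2]{LLTT17} and \cite[Theorem 2.2]{BFO18}) the splitting above delivers the stated estimate together with the asserted continuity $u \in C^1([0,T];H^{-1}(\Omega))\cap C([0,T];L^2(\Omega))$.
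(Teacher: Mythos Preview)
The paper does not give its own proof of this theorem: it is stated as a recollection of known observability results, with explicit pointers to \cite[Theorem~3.3]{BLRII}, \cite[Proposition~1.2]{LLTT17}, and \cite[Theorem~2.2]{BFO18} for the derivation in the present form. So there is no in-paper argument to compare against.

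Your sketch is a sound outline of the standard route to such an estimate and is consistent with what the cited references do: split off the inhomogeneous data by transposition/hidden regularity (Lions--Magenes, Lasiecka--Lions--Triggiani) to reduce to the homogeneous Dirichlet problem, then invoke the BLR interior observability inequality transported to the $L^2\times H^{-1}$ scale via HUM duality. The only places where more detail would be required in a full proof are the ones you flag yourself: the precise functional setting for the transposition solution with $L^2(\Sigma)$ Dirichlet data and $H^{-1}(\M)$ source, and the rigorous passage from $H^1_0\times L^2$ observability to its dual version for interior observation. Since the paper treats Theorem~\ref{continuum} as a black box and defers entirely to \cite{BFO18} for these details, your proposal is at least as complete as what the paper offers.
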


Let us remark that the geometric control condition is sharp in the
sense that Theorem~\ref{continuum} fails to hold if the geometric
control condition does not hold on the set $(0,T)\times \omega$
\cite{BLR}. 

The objective of the paper is to design a stabilized spacetime finite element
method for the data assimilation problem \ref{Word:(DA)}, which allows
for higher order approximation spaces. The method will also allow
for an error analysis exploiting
the stability of Theorem \ref{continuum} and the accuracy of the
spaces in an optimal way. To the best of our knowledge this is the
first complete numerical analysis of the data assimilation problem for
the wave equation, using high order spaces.

\subsection{Previous literature}

Spacetime methods for inverse
problems subject to the wave equation were introduced in \cite{CM15}
with an application to the control problem in \cite{CM15b}. In those
works however, the required $H^2$ regularity of the constraint
equation was respected on the level of approximation leading to an
approach using $C^1$-continuous approximation spaces in spacetime.
Herein we instead use an approach where the approximating space is
only $H^1$-conforming and we handle instabilities arising due to the
lack of conformity of the space through the addition of stabilization
terms. This approach to stabilization of ill-posed problems draws on
the works \cite{Bu13,Bu14b}, for the elliptic Cauchy
problem. In the context of time dependent problems, unique
continuation for the heat
equation was considered in \cite{BO18, BIO18}, with piecewise affine
finite elements in space and finite differences for the time
discretization. Finally using a similar low order approach, with
conventional, finite difference type time-discretization, the unique continuation and control problems for the wave
equation were considered in \cite{BFO18}
and \cite{BFO19} respectively. Another strategy requiring only $H^1$-regularity consists in reformulating the second order wave equation as a first order system; it is examined in \cite{MM19} for the corresponding controllability problem. 

The upshot in the present work is
that by using the spacetime framework, we may very
easily design a method that allows for high order approximation spaces
with provable, close to optimal, convergence rates, without
resorting to $C^1$-type approximation spaces.

There are several works that approach the data assimilation problem (DA), or its close variants, by solving a sequence of classical initial--boundary value problems for the wave equation. Such methods have been proposed independently in 
\cite{Stefanov2009a}, in the context of a particular application to Photoacoustic tomography, and in
\cite{Ramdani2010}, based on the so-called Luenberger observers algorithm first introduced in an ODE context in \cite{Auroux2005}. An error estimate for a discretization of a Luenberger observers based algorithm was proven in \cite{lHR},
giving logarithmic convergence rate with respect to the mesh size. 
Better convergence rates can be proven if a stability estimate is available on a scale of discrete spaces. Such discrete estimates were first derived in \cite{IZ} and we refer the reader to
the survey articles \cite{Z,EZ} and the monograph  \cite{Ervedoza2013}, as well as the recent paper \cite{EMZ} for more
details. Optimal-in-space discrete estimates can be derived from continuous estimates \cite{Miller}, however, spacetime optimal discrete estimates are known only for specific situations.  

The data assimilation problem (DA) can also be solved using the
quasi-reversibility method. This method originates from \cite{lLL}, and it has been applied to data assimilation problems subject to the wave equation in \cite{KM, KR}, and more recently to the Photoacoustic tomography problem in \cite{lCK}. We are not aware of any works proving sharp convergence rates for the quasi-reversibility method with respect to mesh size. 

The data assimilation problem (DA) arises in several applications. We mentioned already Photoacoustic tomography (PAT), and refer to \cite{Wang2009} for physical aspects of PAT, to \cite{Kuchment2008} for a mathematical review,
and to \cite{Acosta2015, lCO, Nguyen2015, Stefanov2015b} for the PAT problem in a cavity, the case closest to (DA).
Another interesting application is given in \cite{lBDP} where an obstacle detection problem is solved by using a level set method together with the quasi-reversibility method applied to a variant of (DA).

\subsection{Outline of the paper}
The paper is organized as follows. In Section~\ref{notations} we introduce a few notations used in the paper. In Section~\ref{disc_int_sec}, we start by introducing the mixed spacetime mesh followed by the discrete representation of \ref{Word:(DA)} in terms of a primal dual Lagrangian formulation. The Euler-Lagrange equations are studied, showing in particular that there exists a unique solution to the discrete formulation of \ref{Word:(DA)}. Section~\ref{error_estimate} is concerned with proving the convergence rates for the numerical error functions corresponding to the primal and dual variables. Finally, in Section~\ref{num_exp_sec} we provide two numerical examples that illustrate the theory while also making a comparison with the $H^2$-conformal finite element method introduced in \cite{CM15}.  

\section{Notations}
\label{notations}

We write 
$
\spgrad u = (\partial_t u, \nabla u),
$
where $\nabla u\in \R^n$ is the usual gradient with respect to the space
variables. The wave operator may be written as
\[
\Box u = -\spgrad \cdot (A \spgrad u),
\]
where $A$ is the matrix associated to the Minkowski metric in $\R^{1+n}$, that is, 
\[
A = \left[\begin{array}{cc} 
-1 & 0_{[1,n]} \\
0_{[n,1]} & \mathbb{I}_{[n,n]}
\end{array} \right]
\]
with $\mathbb{I}_{[n,n]}$ denoting the $n \times n$ identity matrix.
We introduce the notations
$$
(u,v)_{\mathcal M} = \int_0^T\!\!\!\int_{\Omega} u(t,x) v(t,x) \,dx\,dt, \quad \|u\|_{\mathcal M}=(u,u)^{\frac{1}{2}}_{\mathcal M}
$$
and
$$
(u,v)_{\mathcal O} = \int_0^T\!\!\!\int_{\omega} u(t,x) v(t,x) \,dx\,dt,\quad \|u\|_{\mathcal O}=(u,u)^{\frac{1}{2}}_{\mathcal O}
$$
and use an analogous notation for inner products over other subsets of $\overline{\mathcal M}$, with the understanding that the natural volume measures are used in the case of subdomains. 
We also use the shorthand notation 
\begin{align*}
\aform{u,z} = (A \spgrad u, \spgrad z)_{\mathcal{M}},
\end{align*}
and note in passing that given any $(u_0,u_1) \in H^1_0(\Omega)\times L^2(\Omega)$, the solution $u$ to equation \eqref{pf}
satisfies
\[
a(u,v)=0, \quad \forall v \in H^1_0(\mathcal{M}).
\]

Finally, we fix an integer $p$, and make the standing assumption that the continuum solution $u$ to \ref{Word:(DA)} satisfies
$$ u \in H^{p+1}(\M) \quad \text{for some $p \in \N$}.$$
The index $p$ defined above will correspond to the highest order of the spacetime polynomial approximations that can be used for the discrete solution to \ref{Word:(DA)}, while still getting optimal convergence for the numerical errors.

\section{Discrete formulation of the data assimilation problem}
\label{disc_int_sec}

Let us start this section by observing that the solution $u$ to the data assimilation problem \ref{Word:(DA)} can be obtained by analyzing the saddle points for the continuum Lagrangian functional $\mathcal J(u,z)$ that is defined through
$$ \mathcal J(u,z)=\frac{1}{2}\|u-q\|^2_{\mathcal O}+a(u,z),$$
for any 
$$ u \in H^1(0,T;L^2(\Omega))\cap L^2(0,T;H^1_0(\Omega))\quad \text{and}\quad  z \in H^1_0(\M).$$
Here, the wave equation is imposed on the primal variable $u$ by introducing a Lagrange multiplier $z$. It is easy to verify the the solution $u$ to \ref{Word:(DA)} together with $z=0$ is a saddle point for the Lagrangian functional. 

Motivated by this example, we would like to present a discrete Lagrangian functional to numerically solve \ref{Word:(DA)}. We will use discrete stabilizer (also called regularizer) terms that guarantee the existence of a unique critical point. These terms will be designed with the goal of minimizing the numerical error functions for the primal and dual variables. 

We begin by introducing the spacetime mesh in Section~\ref{spacetime_sec}. Then a discrete version of the bi-linear functional $a(\cdot,\cdot)$ is provided in Section~\ref{weak_discrete_sec}, followed by the introduction of the stabilization terms in Section~\ref{stabilizer_sec}. Finally, we present the discrete formulation of \ref{Word:(DA)} in Section~\ref{lagrangian_sec}.

\subsection{Spacetime discretization}
\label{spacetime_sec}
In this section we will introduce the spacetime finite element method
that we propose.  The method is using an $H^1$-conforming piecewise
polynomial space defined on a spacetime triangulation that can
consist of simplices, or prisms. Herein for simplicity we restrict the
discussion to the simplicial case. 
To be able to handle the case of curved boundaries without
complicating the theory with estimations of the error in the
approximation of the geometry we impose boundary conditions using a
technique introduced by Nitsche \cite{Nit71}. See also \cite[Theorem 2.1]{Thom97}
for a discussion of the application of the method to curved boundaries.

Consider a family $\mathcal T = \{\mathcal{T}_h;\ h > 0\}$ of quasi
uniform triangulations of $\mathcal{M}$ consisting of simplices $\{K\}$
such that the intersection of any two distinct simplices is
either a common vertex, a common edge or a common face. We let $h_K=
\mbox{diam}(K)$ and $h =
\max_{K \in\mathcal{T}} h_K$, (see e.g. \cite[Def. 1.140]{Ern2004}). By quasi-uniformity $h/h_K$ is uniformly
bounded, and therefore for simplicity the quasi-uniformity constant will be set to
one below. 
Observe that
we do not consider discretization of the smooth boundary $\p \mathcal M$, but instead we allow triangles adjacent to the boundary
to have curved faces, fitting $\mathcal M$. Finally, given any $k \in \N$, we let $V^k_h$ be the $H^1(\mathcal{M})$-conformal approximation space of polynomial
degree less than or equal to $k$, that is, 
\begin{align}
\label{def_Vh}
V^k_h = \{u \in C(\M):\ u\vert_K \in \mathbb{P}_k(K),\  
\forall K \in \mathcal{T}_h\},
\end{align}
where $\mathbb{P}_k(K)$ denotes the set of polynomials of degree less
than or equal to $k \ge 1$ on $K$. 

Next, we record two inequalities that will be used in the paper. The family $\mathcal{T}$
satisfies the following trace inequality, see e.g. \cite[Eq. 10.3.9]{BS08},
\begin{equation}\label{trace_cont}
\|u\|_{L^2(\partial K)} \lesssim h^{-\frac12} \|u\|_{L^2(K)} + h^{\frac12}
\|\nabla u\|_{L^2(K)}, \quad u \in H^1(K),
\end{equation}

The family $\mathcal{T}$
also satisfies the following discrete inverse inequality,
see e.g. \cite[Lem. 1.138]{Ern2004},
\begin{equation}
\label{inverse_disc}
\|\nabla u\|_{L^2(K)} \lesssim
h^{-1} \|u\|_{L^2(K)} , \quad u \in \mathbb{P}_p(K).
\end{equation}

\begin{rmk}
We will use the notations $A\lesssim B$ (resp., $A\gtrsim B$) to imply that there exists a constant $C>0$ independent of the spacetime mesh parameter $h$, such that the inequalities 
$A \leq CB$ (resp., $A\geq CB$) hold.
\end{rmk}

\begin{rmk}
It is also possible to consider the space $V^k_h \cap C^1(\M)$ with $k \geq 2$, for
the approximation of the primal variable. In this case the method
coincides with that of \cite{CM15} and the analysis shows that
optimal error estimates are satisfied also in this case.
\end{rmk}

\begin{rmk}
We will use the approximation space $V_h^p$ for the primal variable $u$. We will also fix an integer $q\leq p$ and use the space $V_h^q$ for the approximation of the dual variable $z$. As we will see, using our method, the approximation space of the dual variable can be quite coarse without sacrificing any rate of convergence for the discrete primal variable (i.e we can take $q=1$).
\end{rmk}

\subsection{A discrete bi-linear formulation for the wave equation}
\label{weak_discrete_sec}

Since no boundary conditions are imposed on the space $V^p_h$, the form
$a(u,z)$ needs to be modified on the discrete
level. For the formulation to remain consistent we propose the
following 
modified bilinear form on $V^p_h \times V^q_h$,
\[
\ahform{u_h,z_h} = \aform{u_h,z_h} - (A \spgrad u_h \cdot n_{\partial \mathcal M},
z_h)_{\partial \mathcal{M}} -  (\grad z_h \cdot n_{\partial \Omega}, u_h)_{\Sigma}.
\]
Here $n_{\partial \mathcal M}$ and $n_{\partial \Omega}$ are the outward unit normal vectors on $\p \mathcal M$ and $\p \Omega$ respectively. 
The last term in the right hand side is added to make the weak form of
the Laplace operator symmetric even in the case where no boundary
conditions are imposed on the discrete spaces. Depending on how the
stabilizing terms are chosen below, this term is not
strictly necessary in this work, but becomes essential if the
formulation must be consistent also for the adjoint equation. 

Observe that, using integration by parts, there holds 
\begin{equation}\label{eq:const_ah}
\ahform{u,z_h} = (\Box u, z_h)_{\mathcal{M}}- \underbrace{ (\grad z_h \cdot n_{\partial \Omega}, u)_{\Sigma}}_{=0} = (\Box u, z_h)_{\mathcal{M}}
\end{equation}
for all $u \in
H^2(\mathcal{M}) \cap L^2(0,T; H^1_0(\Omega))$.

\subsection{Formulation of the discrete stabilization terms for primal and dual variables}
\label{stabilizer_sec}

We denote by
$\mathcal{F}_h$ the set of internal faces of $\mathcal{T}_h$,
and define for $F \in \mathcal{F}_h$, the jump of a scalar quantity $u$
over $F$ by 
\[
\jump{ u}_F = u\vert_{K_1} - u\vert_{K_2}
\]
where $K_1, K_2 \in \mathcal{T}_h$ are the two simplices satisfying 
$K_1 \cap K_2 = F$. For vector valued quantities $u$ we define
\begin{align*}
\jump{n \cdot u}_F &= n_1 \cdot u|_{K_1}  + n_2 \cdot u|_{K_2},
\end{align*}
where $n_j$ is the outward unit normal vector of
$K_j$, $j=1,2$. Sometimes we drop the normal to alleviate the
notation. 
 The norm over all the faces in $\mathcal{F}_h$ will be
denoted by
\[
\|v\|_{\mathcal{F}_h} = \left(\sum_{F \in \mathcal{F}_h} \|v\|^2_F\right)^{\frac12},
\]
and the norm over all the simplices $\{K\}$ will be denoted by
$$
\|v\|_{\mathcal{T}_h} = \left(\sum_{K\in \mathcal{T}_h} \|v\|_K^2\right)^{\frac12}.
$$ 
For each $K \in \mathcal T_h$, we define the elementwise stabilizing form
\begin{equation}\label{eq:forward_stab}
s_K(u_h,u_h) =\|h \Box u_h \|_K^2+ \|h^{-\frac12}
u_h\|_{\partial K \cap \Sigma}^2+ \sum_{F \in \partial K \cap \mathcal{F}_h} \|
h^{\frac12} \jump{A \spgrad u_h}\|_F^2.
\end{equation}
A dual stabilizer is defined by 
\begin{equation}\label{eq:dual_stab2}
s_K^*(z_h,z_h) = \|\spgrad z_h\|_{K}^2+\|h^{-\frac12}
z_h\|^2_{\partial K \cap \partial
  \mathcal{M}}.
\end{equation}

Subsequently, the global stabilizers are defined by summing over all the elements:
\[
s= \sum_{K \in \mathcal{T}_h} s_K \quad \mbox{and} \quad s^*= \sum_{K \in \mathcal{T}_h} s^*_K
\]
and we define the semi norms $|u|_s=s(u,u)^{\frac12}$ and $|z|_{s^*} = s^*(z,z)^{\frac12}$. Observe that the following stability estimate holds:
\begin{equation}\label{lem:stab_dual}
|w_h|_{s^*} \lesssim \|\spgrad w_h\|_{\mathcal{M}} + \|h^{-\frac12}
w_h\|_{\partial \mathcal{M}}\quad \forall \, w_h \in V_h^q.
\end{equation}
We also point out for future reference that for a solution to the data
assimilation problem $u \in H^2(\M)$ and all $u_h \in V_h^p$, there holds 
    \begin{align}\label{s_on_u}
s(u-u_h,u-u_h) = s(u_h,u_h).
    \end{align}

\begin{rmk}
There is some freedom in the choices of the discrete regularization terms that yield the same error estimates as in Theorem~\ref{t1} below. The choice is more flexible for the dual variable $z$ since the continuum analogue for this variable is zero. For instance we can define the following stabilization term for the dual variable:
\begin{equation}\label{eq:dual_stab1}
s_K^*(z_h,z_h) =
s_K(z_h,z_h)+ \|h^{-\frac12} z_h\|_{\partial K \cap (\partial
  \mathcal{M}\setminus \Sigma)}+\|h^{\frac12}\partial_t z_h\|_{\partial K \cap (\partial
  \mathcal{M}\setminus \Sigma)}^2.
\end{equation}

It is also possible to use a stabilization that is exclusively carried
by the faces of the computational mesh provided $q \in \{p-2,p-1,p\}$. In this case we define
\begin{equation}\label{eq:jump_stab2}
s_K(u_h,u_h) = \sum_{F \in \partial K \cap \mathcal{F}_h} \left(\| h^{\frac12}
\jump{A \spgrad u_h}\|_F^2 + \|h^{\frac32}_F \jump{\Box u_h} \|_F^2\right)+ \|h^{-\frac12}
u_h\|_{\partial K \cap \Sigma}^2.
\end{equation}
In the second jump term we are allowed to split the operator in the
time derivative and the Laplace operator (or second order derivatives
in space) without sacrificing stability or consistency since
\[
\|h^{\frac32}_F \jump{\Box u_h} \|_F^2 \leq \|h^{\frac32}_F \jump{\partial_t^2 u_h} \|_F^2+\|h^{\frac32}_F \jump{\Delta u_h} \|_F^2.
\]
Weak consistency of the right order still holds since for a sufficiently smooth solution $u$ 
$$
 \|h^{\frac32}_F \jump{\partial_t^2 u} \|_F^2+\|h^{\frac32}_F \jump{\Delta u} \|_F^2
 = 0.
$$
\end{rmk}

\subsection{The discrete Lagrangian formulation for the data assimilation problem}
\label{lagrangian_sec}
Our finite element method is defined by the discrete Lagrangian functional
$$ \mathcal L:V_h^p \times V_h^q \to \R,$$
through
\begin{align}
\label{Lagrangian}
\mathcal{L}(u,z) = \frac12 \|u - u_\cO\|_{\mathcal{O}}^2 +
 \frac{\gamma}{2} \,s(u,u) - \frac{\gamma^*}{2}\, s^*(z,z)
+ \ahform{u,z},
\end{align}
where $\gamma,\gamma^*>0$ are fixed constants. 

The corresponding Euler-Lagrange equations read as follows. Find $(u_h,z_h) \in
V^p_h \times V^q_h$ such that for all $(v_h,w_h) \in
V^p_h \times V^q_h$ there holds
\begin{align}
\label{eq:C0_forward}
\ahform{u_h,w_h} - \gamma^* s^*(z_h,w_h) & = 0, \\ \label{eq:C0_adjoint}
(u_h,v_h)_{\mathcal{O}} + \gamma s(u_h,v) + \ahform{v_h,z_h} & = (u_\cO,v_h)_{\mathcal{O}} .
\end{align}

To simplify the notation we introduce the bilinear form
\[
\mathcal{A}_h[(u_h,z_h),(v_h,w_h)]=(u_h,v_h)_{\mathcal{O}} + \gamma s(u_h,v_h) +
\ahform{v_h,z_h} + \ahform{u_h,w_h} - \gamma^* s^*(z_h,w_h).
\]
The discrete problem \eqref{eq:C0_forward}-\eqref{eq:C0_adjoint} can then be recast as follows. Find $u_h,z_h \in
V^p_h \times V^q_h$ such that
\begin{equation}\label{eq:compact_form}
\mathcal{A}_h[(u_h,z_h),(v_h,w_h)]= (u_\cO,v_h)_{\mathcal{O}}, \quad \forall
(v_h,w_h) \in V^p_h \times V^q_h.
\end{equation}
We note that by \eqref{s_on_u} and  \eqref{eq:const_ah} there holds
\begin{equation}\label{eq:gal_ortho}
\mathcal{A}_h[(u - u_h,z_h),(v_h,w_h)] = 0.
\end{equation}

Define the residual norm
\[
\tnorm{(u,z)}^2_S = \|u\|_{\mathcal{O}}^2 + |u|_s^2+|z|^2_{s^*},
\]
and a continuity norm 
\[
\|u\|_{*}=\|\nabla_{t,x} u\|_{\M}+\|h^{\frac{1}{2}}A\nabla_{t,x}u\cdot n\|_{\partial \M} + \|h^{-\frac{1}{2}}u\|_\Sigma.
\]

For the purpose of our error analysis later, we also introduce a family of interpolants $\pi^k_h$, that are required to satisfy

\begin{assumption}$\pi^k_h: H^k(\M)\to V^k_h$ preserves Dirichlet boundary conditions and additionally satisfies
$$\text{$\|u-\pi^k_hu\|_{H^{m}(\M)}\lesssim h^{s-m}\|u\|_{H^{s}(\M)}$ for all $u \in H^s(\M)$ with $s=0,1,\ldots,k+1$ and $m=0,1,\ldots,s$.}$$
\end{assumption}

An example of such an interpolant is the Scott-Zhang interpolant \cite{SZ90}. For brevity, we will use the notations:
$$ \pi_h = \pi^1_h\quad \text{and}\quad \Pi_h=\pi^p_h.$$
We have the following lemma regarding the residual norm:
\begin{lem}\label{approx_residue}
Let $u \in H^{p+1}(\mathcal{M})$. There holds:
\[
\tnorm{(u- \Pi_h u,0)}_S \lesssim h^p \|u\|_{H^{p+1}(\mathcal{M})},
\]
\end{lem}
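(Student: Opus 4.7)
\medskip

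\noindent\textbf{Proof plan.} Since the dual component is zero we have $|0|_{s^*}=0$, so $\tnorm{(u-\Pi_h u,0)}_S^2 = \|u-\Pi_h u\|_{\mathcal O}^2 + |u-\Pi_h u|_s^2$. The plan is to handle the $L^2$ piece directly via the Assumption, and then break $|u-\Pi_h u|_s^2 = \sum_K s_K(u-\Pi_h u,u-\Pi_h u)$ into its three defining contributions and bound each using only the Assumption together with the trace inequality \eqref{trace_cont} on each element. Throughout I apply the interpolation bound with $k=p$ and $s=p+1$, so $\|u-\Pi_h u\|_{H^m(\mathcal M)}\lesssim h^{p+1-m}\|u\|_{H^{p+1}(\mathcal M)}$ for $m=0,1,\ldots,p+1$.

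First, $\|u-\Pi_h u\|_{\mathcal O}\leq \|u-\Pi_h u\|_{\mathcal M}\lesssim h^{p+1}\|u\|_{H^{p+1}(\mathcal M)}$, which is of higher order than required. Next, for the volume contribution to $s$, I bound $|\Box(u-\Pi_h u)|\lesssim |D^2(u-\Pi_h u)|$ pointwise and sum to obtain $\|h\Box(u-\Pi_h u)\|_{\mathcal T_h}\lesssim h\,\|u-\Pi_h u\|_{H^2(\mathcal M)}\lesssim h^p\|u\|_{H^{p+1}(\mathcal M)}$. For the Dirichlet contribution, I apply \eqref{trace_cont} elementwise to $w=u-\Pi_h u\in H^1(K)$, sum over the (finitely many layers of) elements touching $\Sigma$, and get
\[
\|h^{-\frac12}(u-\Pi_h u)\|_\Sigma^2 \lesssim h^{-2}\|u-\Pi_h u\|_{\mathcal M}^2 + \|\spgrad(u-\Pi_h u)\|_{\mathcal M}^2 \lesssim h^{2p}\|u\|_{H^{p+1}(\mathcal M)}^2.
\]

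For the face jump contribution I use that $u\in H^{p+1}(\mathcal M)\subset H^2(\mathcal M)$ implies $\jump{A\spgrad u}_F=0$ for every internal face, so $\jump{A\spgrad(u-\Pi_h u)}_F=-\jump{A\spgrad \Pi_h u}_F$, and a Cauchy--Schwarz type bound on the two sides of the face yields
\[
\sum_{F\in\mathcal F_h}\|h^{\frac12}\jump{A\spgrad(u-\Pi_h u)}\|_F^2 \lesssim \sum_{K\in\mathcal T_h}\|h^{\frac12}\spgrad(u-\Pi_h u)\|_{\partial K}^2.
\]
Applying \eqref{trace_cont} componentwise to $\spgrad(u-\Pi_h u)$ on each $K$ then gives
\[
\sum_{K\in\mathcal T_h}\|h^{\frac12}\spgrad(u-\Pi_h u)\|_{\partial K}^2 \lesssim \|\spgrad(u-\Pi_h u)\|_{\mathcal M}^2 + h^2\|u-\Pi_h u\|_{H^2(\mathcal M)}^2 \lesssim h^{2p}\|u\|_{H^{p+1}(\mathcal M)}^2.
\]
Summing the three contributions proves $|u-\Pi_h u|_s\lesssim h^p\|u\|_{H^{p+1}(\mathcal M)}$ and completes the estimate.

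The argument is purely interpolation theory and trace/inverse inequalities; nothing is conceptually difficult. The only place requiring care is the jump contribution, where one must first eliminate the exact part $u$ (using its $H^2$ regularity) before invoking a trace inequality — one cannot directly apply \eqref{inverse_disc} because $u$ itself is not a polynomial. The bookkeeping of the powers of $h$ in the Dirichlet and jump terms (where the seminorm already carries $h^{-\frac12}$ or $h^{\frac12}$) is the other place where a small slip would cost the optimal order.
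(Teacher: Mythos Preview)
Your proof is correct and follows essentially the same route as the paper: split $\tnorm{(u-\Pi_h u,0)}_S$ into the $L^2(\mathcal O)$ piece and the three contributions to $|u-\Pi_h u|_s$, then bound each using the approximation Assumption together with the elementwise trace inequality \eqref{trace_cont}. The only cosmetic differences are that the paper organizes the boundary and jump estimates via local patches $\Delta_F,\tilde\Delta_F$ (as is natural for Scott--Zhang type bounds) whereas you invoke the global estimate from the Assumption directly, and your aside about first eliminating $u$ from the jump is unnecessary for your own argument since you apply \eqref{trace_cont} to $\spgrad(u-\Pi_h u)\in H^1(K)$ rather than an inverse inequality.
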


\begin{proof}
Note that
\[
\tnorm{(u- \Pi_h u,0)}_S = \|u- \Pi_h u\|_{\mathcal{O}} + |u- \Pi_h u|_s.
\]
For the first term we immediately see that
\[
 \|u- \Pi_h u\|_{\mathcal{O}} \leq  \|u- \Pi_h u\|_{\mathcal{M}}
 \lesssim h^{p+1} \|u\|_{H^{p+1}(\mathcal{M})}.
\]
To bound the contribution from the stabilization term, recall by definition that
\begin{multline*}|u-\Pi_h u|_s^2= \sum_{K \in \mathcal T_h}\left(\|h \Box (u-\Pi_hu)\|_K^2+ \|h^{-\frac12}
(u-\Pi_hu)\|_{\partial K \cap \Sigma}^2
+\sum_{F \in \partial K \cap \mathcal{F}_h} \|
h^{\frac12} \jump{A \spgrad (u-\Pi_hu)}\|_F^2\right).\end{multline*}
We proceed to bound the three terms on the right hand side. For the first term, we note that
$$ \sum_{K\in\mathcal T_h} \|h\Box(u-\Pi_hu)\|_K^2 \lesssim  \sum_{K\in\mathcal T_h} \|\nabla_{t,x}(u-\Pi_hu)\|_K^2 \lesssim h^{2p}\|u\|_{H^{p+1}(\M)}^2.$$
For the second term, we define $\Delta_F = \{K: \bar K \cap F \ne \emptyset
\}$, $\tilde \Delta_F = \{K: \bar K \cap \bar \Delta_F \ne \emptyset
\}$ and use \eqref{trace_cont} to write
\[
\|h^{-\frac12} (u - \Pi_h u)\|^2_{F} \lesssim h^{-1} \|u -
\Pi_h u\|^2_{\Delta_F} + \|\nabla_{t,x} (u -
\Pi_h u)\|^2_{\Delta_F} \lesssim h^{2p} |u|^2_{H^{p+1}(\tilde \Delta_F)}.
\]
By collecting the above local bounds and using the fact that $\tilde\Delta_F$ have finite overlaps, we conclude that
$$\sum_{K\in \mathcal T_h}  \|h^{-\frac12}
(u-\Pi_hu)\|_{\partial K \cap \Sigma}^2\lesssim  h^{2p} |u|^2_{H^{p+1}(\tilde \Delta_F)}. $$
For the last term, observe that using
\eqref{trace_cont} again, we have:
\[
\|h^{\frac12}_F \jump{A \spgrad (u - \Pi_h u)}\|^2_F \lesssim  \sum_{K \in
  \Delta_F} \biggl( \|\spgrad (u -
\Pi_h u)\|^2_{K} + h^2 \|D^2_{t,x} (u - \Pi_h u)\|^2_K \biggr) \lesssim h^{2p}  |u|^2_{H^{p+1}(\tilde \Delta_F)},
\]
where $D^2_{t,x}$ denotes the Hessian matrix consisting of second order derivatives in space and time variables. The claim follows by collecting the above local bounds analogously to the second term above.
\end{proof}
Next lemma is concerned with approximation properties of the continuity norm $\|\cdot\|_*$ defined earlier. The proof is analogous to the proof of the previous lemma and follows from the Definition of the interpolant $\Pi_h$ together with \eqref{trace_cont} and is therefore omitted.

\begin{lem}
\label{approx_star}
Let $u\in H^{p+1}(\M)$. There holds:
$$ \|u-\Pi_hu\|_* \lesssim h^p\|u\|_{H^{p+1}(\M)},$$
where we recall that
$$\|u\|_{*}=\|\nabla_{t,x} u\|_{\M}+\|h^{\frac{1}{2}}A\nabla_{t,x}u\cdot n\|_{\partial \M} + \|h^{-\frac{1}{2}}u\|_\Sigma.$$
\end{lem}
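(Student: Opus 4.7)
My strategy is to mimic the computation in Lemma~\ref{approx_residue} by decomposing $\|u-\Pi_h u\|_*$ into its three constituent pieces and handling each with the interpolation bounds of Assumption~1 combined with the trace inequality \eqref{trace_cont}. Since the statement is local in character and only requires summing over elements touching $\partial\M$, this should be essentially a routine calculation rather than the sort of argument where a new idea is needed.

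First I would bound the interior term $\|\nabla_{t,x}(u-\Pi_h u)\|_\M$ directly via Assumption~1 applied with $k=p$, $s=p+1$, and $m=1$, which yields $h^{p}\|u\|_{H^{p+1}(\M)}$ immediately.

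For the weighted normal gradient term $\|h^{\frac12} A\nabla_{t,x}(u-\Pi_h u)\cdot n\|_{\partial\M}$, I would apply \eqref{trace_cont} componentwise to $\nabla_{t,x}(u-\Pi_h u)$ on each element $K$ with $\partial K\cap \partial \M\neq\emptyset$, giving
\[
\|\nabla_{t,x}(u-\Pi_h u)\|_{\partial K}^{2}\lesssim h^{-1}\|\nabla_{t,x}(u-\Pi_h u)\|_{K}^{2} + h\|D^{2}_{t,x}(u-\Pi_h u)\|_{K}^{2}.
\]
Applying Assumption~1 with $m=1$ and $m=2$ respectively controls both terms by $h^{2p-1}\|u\|_{H^{p+1}(K)}^{2}$, and after multiplying by $h$ from the outside prefactor and summing over boundary elements I obtain a contribution of order $h^{p}\|u\|_{H^{p+1}(\M)}$. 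This is the step that is slightly more delicate than the others, since it requires $H^{p+1}$ regularity to justify the $m=2$ estimate; provided $p\ge 1$ (as stated in \eqref{def_Vh}), this is fine.

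For the Dirichlet trace term $\|h^{-\frac12}(u-\Pi_h u)\|_\Sigma$, I would apply \eqref{trace_cont} to $u-\Pi_h u$ itself on each element $K$ meeting $\Sigma$, obtaining
\[
\|u-\Pi_h u\|_{\partial K}^{2}\lesssim h^{-1}\|u-\Pi_h u\|_{K}^{2} + h\|\nabla_{t,x}(u-\Pi_h u)\|_{K}^{2}\lesssim h^{2p+1}\|u\|_{H^{p+1}(K)}^{2},
\]
by Assumption~1 with $(s,m)=(p+1,0)$ and $(p+1,1)$. Multiplying by $h^{-1}$ and summing over boundary elements produces a contribution bounded by $h^{p}\|u\|_{H^{p+1}(\M)}$. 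Collecting the three contributions completes the proof. The only bookkeeping point to keep in mind, exactly as in the previous lemma, is that the element patches used in the local interpolation estimates have finite overlap, so the sum of local $H^{p+1}(\tilde\Delta_F)$ norms is controlled by $\|u\|_{H^{p+1}(\M)}$.
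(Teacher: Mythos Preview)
Your proposal is correct and follows exactly the approach the paper indicates: the paper omits the proof entirely, stating only that it is analogous to Lemma~\ref{approx_residue} and follows from the definition of $\Pi_h$ together with \eqref{trace_cont}. Your term-by-term decomposition and use of Assumption~1 with the trace inequality is precisely that argument carried out in detail.
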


We end this section by proving that the solution to
\eqref{eq:compact_form} exists and is unique.
\begin{prop}
The Euler-Lagrange equation \eqref{eq:compact_form} has a unique solution $(u_h,z_h) \in V_h^p \times V_h^q$.
\end{prop}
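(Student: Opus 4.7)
The plan is to exploit the finite-dimensional, linear structure of \eqref{eq:compact_form}: since $\mathcal{A}_h[\cdot,\cdot]$ is a square linear system on $V_h^p \times V_h^q$, existence is equivalent to uniqueness. So I would set $u_{\mathcal{O}} \equiv 0$ and show that any solution $(u_h,z_h)$ must vanish.

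The key algebraic identity comes from testing with $(v_h,w_h) = (u_h,-z_h)$. Because the dual variable enters through the antisymmetric combination $a_h(v_h,z_h) + a_h(u_h,w_h)$, the off-diagonal terms cancel and one is left with
\[
\mathcal{A}_h[(u_h,z_h),(u_h,-z_h)] = \|u_h\|_{\mathcal O}^2 + \gamma\,|u_h|_s^2 + \gamma^*\,|z_h|_{s^*}^2 = 0.
\]
Since each term is nonnegative, they all vanish. From $|z_h|_{s^*} = 0$ and the definition \eqref{eq:dual_stab2}, I get $\nabla_{t,x}z_h = 0$ on every element and $z_h|_{\partial \mathcal{M}} = 0$; by the $H^1$-conformity of $V_h^q$ this forces $z_h \equiv 0$.

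Recovering $u_h \equiv 0$ is the more substantive step. From $|u_h|_s = 0$ I extract three facts: $\Box u_h = 0$ elementwise, $u_h|_{\Sigma} = 0$, and $\jump{A\nabla_{t,x}u_h} = 0$ across every internal face. Coupled with $u_h \in H^1(\mathcal{M})$, a standard elementwise integration by parts against an arbitrary $\phi \in C_c^\infty(\mathcal{M}^\circ)$ shows that the jump and volume contributions assemble into
\[
\aform{u_h,\phi} = \sum_{K}\int_K \Box u_h\,\phi + \sum_F \int_F \jump{A\nabla_{t,x}u_h\cdot n}\,\phi = 0,
\]
so $\Box u_h = 0$ holds in the distributional sense on the whole of $\mathcal{M}$. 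Combining this with $\|u_h\|_{\mathcal{O}} = 0$, the vanishing trace on $\Sigma$, and the running geometric control assumption, Theorem~\ref{continuum} then yields $u_h \equiv 0$, completing the uniqueness argument.

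The main obstacle I anticipate is the passage from the elementwise wave equation and vanishing jumps to a bona fide distributional solution on $\mathcal{M}$; once this is established the observability estimate closes the argument immediately. One subtlety worth double-checking is that the quantity $\|h\,\Box u_h\|_K$ in \eqref{eq:forward_stab} (rather than an unweighted $L^2$-norm) still forces $\Box u_h = 0$ pointwise on each $K$, which is clear since $h > 0$ on a fixed mesh. Everything else is routine finite-element bookkeeping.
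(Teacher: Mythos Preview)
Your proposal is correct and follows essentially the same route as the paper: reduce to the homogeneous system, test with $(u_h,-z_h)$ so that the $a_h$ terms cancel, deduce that all three nonnegative pieces vanish, eliminate $z_h$ via $|z_h|_{s^*}=0$, and then use the vanishing stabilizer together with Theorem~\ref{continuum} to kill $u_h$.

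The one noteworthy difference is in the passage from elementwise information to a global wave equation for $u_h$. The paper argues that the vanishing of $\jump{A\nabla_{t,x}u_h}$ on internal faces, combined with $u_h\in C^0(\mathcal{M})$, gives $u_h\in C^1(\mathcal{M})$, and then uses $\Box u_h=0$ a.e. You instead integrate by parts elementwise against $\phi\in C_c^\infty(\mathcal{M})$ and show directly that $a(u_h,\phi)=0$, hence $\Box u_h=0$ in $H^{-1}(\mathcal{M})$. Your argument is arguably more robust: the paper's $C^1$ conclusion tacitly uses that $An\cdot n\neq 0$ on each face (since for a continuous piecewise polynomial only the normal component of $\nabla_{t,x}u_h$ can jump, and $\jump{A\nabla_{t,x}u_h\cdot n}=(An\cdot n)\jump{\partial_n u_h}$), which would fail on a light-like face. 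Your distributional route sidesteps this issue entirely and feeds directly into the $H^{-1}$ norm appearing in Theorem~\ref{continuum}.
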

\begin{proof}
Since equation \eqref{eq:compact_form} defines a square system of linear equations, existence is equivalent to uniqueness and
we only need to show that for $u_\cO\equiv 0$, the solution $(u_h,z_h)=(0,0)$ is
unique. Indeed, suppose that equation \eqref{eq:compact_form} with $u_\cO \equiv 0$ holds for some $(u_h,z_h) \in V_h^p\times V_h^q$.
First observe that
\[
\tnorm{u_h,z_h}_S^2 \lesssim  \mathcal{A}_h((u_h,z_h),(u_h,-z_h))=0.
\] 
This means that $|z_h|_{s^*} = 0$. Consequently, $z_h=0$
follows immediately by the Poincar\'e inequality. Next, considering $u_h$ and the stabilization \eqref{eq:forward_stab} we
see
that $u_h \in
C^1(\mathcal{M})$, $\Box u_h = 0$ a.e. in $\mathcal{M}$, $u_h|_{\mathcal O} =0$ and
$u_h\vert_{\Sigma} = 0$. Hence $u_h$ vanishes thanks to Theorem
\eqref{continuum}.
\end{proof}

\section{Error estimates}\label{error_estimate}
We will consider the derivation of error estimates in three
steps. First, we will establish the continuity of $\ahform$ with respect to
$\tnorm{\cdot}_S$ and $\|\cdot\|_*$ on the one hand (see Lemma~\ref{lem:cont1}) and then a
continuity for the exact solution with respect to $\tnorm{\cdot}_S$ and $H^1$ norms (see Lemma~\ref{lem:cont2}) on the other hand. Then,
we will prove convergence of the error in the $\tnorm{\cdot}_S$
norm. Finally, we will use these results to prove a posteriori and a
priori error estimates based on the observability estimate of Theorem
\ref{continuum}.
\begin{lem}\label{lem:cont1}
Let $v \in H^2(\mathcal{M}) + V^p_h$ and $w_h \in V^q_h$, $p \ge q \ge
1$, then there holds
\[
|\ahform{v,w_h}| \lesssim \|v\|_* |w_h|_{s^*}.
\]
\end{lem}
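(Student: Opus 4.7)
The approach is to expand $a_h(v,w_h)$ according to its definition in Section~\ref{weak_discrete_sec} as the sum of the volume piece $a(v,w_h)=(A\spgrad v,\spgrad w_h)_{\mathcal M}$, the consistency boundary term $-(A\spgrad v\cdot n_{\partial\mathcal M}, w_h)_{\partial\mathcal M}$, and the symmetrization term $-(\nabla w_h\cdot n_{\partial\Omega}, v)_\Sigma$, and then to bound each piece by Cauchy--Schwarz with weights chosen so that the factors that appear on the right match those already present in $\|v\|_*$ and $|w_h|_{s^*}$.

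The first two pieces are immediate. For the volume term, Cauchy--Schwarz together with the boundedness of $A$ gives $|a(v,w_h)|\lesssim \|\spgrad v\|_{\mathcal M}\|\spgrad w_h\|_{\mathcal M}$, in which the first factor is part of $\|v\|_*$ and the second is part of $|w_h|_{s^*}$. For the consistency boundary integral, inserting the balanced weights $h^{1/2}$ and $h^{-1/2}$ and applying Cauchy--Schwarz yields $|(A\spgrad v\cdot n_{\partial\mathcal M},w_h)_{\partial\mathcal M}|\leq \|h^{1/2}A\spgrad v\cdot n\|_{\partial\mathcal M}\|h^{-1/2}w_h\|_{\partial\mathcal M}$, which is bounded by $\|v\|_*|w_h|_{s^*}$ directly from the definitions of the two (semi-)norms.

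The main obstacle is the symmetrization term, because $\|v\|_*$ controls only $v$ (weighted by $h^{-1/2}$) on $\Sigma$, while $|w_h|_{s^*}$ contains no explicit boundary control on $\nabla w_h$. To handle this I would exploit that $w_h\in V_h^q$ is elementwise polynomial: for each boundary face $F\subset\Sigma$ lying in a simplex $K_F\in\mathcal T_h$, applying the trace inequality \eqref{trace_cont} componentwise to $\nabla w_h$ and then the discrete inverse inequality \eqref{inverse_disc} to absorb the resulting second derivative term gives $\|\nabla w_h\|_F\lesssim h^{-1/2}\|\nabla w_h\|_{K_F}$. Summing over the boundary faces and using quasi-uniformity yields $\|h^{1/2}\nabla w_h\|_\Sigma\lesssim \|\spgrad w_h\|_{\mathcal M}\leq |w_h|_{s^*}$. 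A final weighted Cauchy--Schwarz then bounds the third piece by $\|h^{1/2}\nabla w_h\|_\Sigma\,\|h^{-1/2}v\|_\Sigma\lesssim |w_h|_{s^*}\|v\|_*$. Collecting the three estimates gives the stated continuity bound; nothing in the argument is sensitive to the decomposition $v\in H^2(\mathcal M)+V_h^p$, since all traces and piecewise gradients of $v$ appearing in $\|v\|_*$ are well defined for such $v$.
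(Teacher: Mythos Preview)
Your proof is correct and follows essentially the same approach as the paper: both expand $a_h(v,w_h)$ into its three pieces, bound each by a weighted Cauchy--Schwarz inequality, and handle the symmetrization term via the trace inequality \eqref{trace_cont} combined with the inverse inequality \eqref{inverse_disc} to obtain $\|h^{1/2}\nabla w_h\cdot n\|_\Sigma\lesssim\|\spgrad w_h\|_{\mathcal M}$. The paper simply records this last bound as \eqref{boundary_disc_bound} without spelling out the element-by-element argument you give, but the content is identical.
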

\begin{proof}
First, observe that by \eqref{trace_cont}, \eqref{inverse_disc} there holds
\begin{equation}
\label{boundary_disc_bound}
\|h^{\frac12}
A\spgrad w_h\cdot n\|_{\Sigma} \lesssim\|\spgrad
w_h\|_{\mathcal{M}}.
\end{equation}
Next, using the Cauchy-Schwarz inequality we write: 
\begin{multline*}
|\ahform{v,w_h}| \leq \|\spgrad v\|_{\mathcal{M}}\|\spgrad
w_h\|_{\mathcal{M}}+\|h^{\frac12} \spgrad v\cdot n\|_{\partial
  \mathcal{M}} \|h^{-\frac12} w_h\|_{\partial \mathcal{M}}
+\|h^{\frac12}
\spgrad w_h\cdot n\|_{\Sigma}  \|h^{-\frac12}
v\|_{\Sigma}.
\end{multline*}
The claim follows by combining the previous two bounds.

\end{proof}
\begin{lem}\label{lem:cont2}
Let $u \in H^{p+1}(\mathcal{M})$ be the exact solution of \eqref{pf} satisfying \eqref{data_omega} and let $(u_h,z_h)\in V^p_h\times V^q_h$ be the unique solution of the discrete Euler-Lagrange equation
\eqref{eq:compact_form}. There holds
\[
\|\Box (u-u_h)\|_{H^{-1}(\mathcal{M})} = \sup_{\substack{w \in
    H^1_0(\mathcal{M})\\
\|w\|_{H^1(\mathcal{M})}=1}}
\aform{u_h,w} \lesssim     \tnorm{(u_h,z_h)}_S.      
\]
\end{lem}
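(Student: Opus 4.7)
\medskip

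\noindent\textbf{Plan of proof.} The equality is essentially a distributional identity: since $u$ solves \eqref{pf}, $\Box u=0$ in $\mathcal{M}$, so $\Box(u-u_h)=-\Box u_h$ as a distribution. For $w\in H^1_0(\mathcal{M})$, integrating by parts (no boundary contribution since $w$ vanishes on $\partial\mathcal{M}$) gives $\langle -\Box u_h,w\rangle_{H^{-1},H^1_0}=(A\spgrad u_h,\spgrad w)_{\mathcal{M}}=\aform{u_h,w}$, and the sup characterization of the $H^{-1}$ norm yields the claimed equality.

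\medskip

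The bulk of the work is the inequality. Fix $w\in H^1_0(\mathcal{M})$ with $\|w\|_{H^1(\mathcal{M})}=1$ and split
\[
\aform{u_h,w}=\aform{u_h,w-\pi_h w}+\aform{u_h,\pi_h w},
\]
where $\pi_h=\pi_h^1$ preserves the homogeneous Dirichlet condition, so $\pi_h w\in V_h^1\subset V_h^q$ with $\pi_h w|_{\partial\mathcal{M}}=0$. For the first piece I would integrate by parts element by element. Since $w-\pi_h w$ vanishes on $\partial\mathcal{M}$, only the volume residuals $\Box u_h$ on each $K$ and the interior face jumps $\jump{A\spgrad u_h}$ on $F\in\mathcal{F}_h$ survive, and applying Cauchy--Schwarz together with the standard interpolation estimates
\[
\|h^{-1}(w-\pi_h w)\|_{K}+\|h^{-1/2}(w-\pi_h w)\|_{F}\lesssim \|\spgrad w\|_{\tilde\Delta},
\]
(the latter via the trace inequality \eqref{trace_cont}) yields $|\aform{u_h,w-\pi_h w}|\lesssim |u_h|_s\,\|w\|_{H^1(\mathcal{M})}$.

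\medskip

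For the second piece I would exploit the Galerkin orthogonality. Testing \eqref{eq:gal_ortho} against $(0,\pi_h w)\in V_h^p\times V_h^q$ gives $\ahform{u-u_h,\pi_h w}=\gamma^*s^*(z_h,\pi_h w)$; combined with the consistency identity \eqref{eq:const_ah} (which applies since $u\in H^{p+1}(\mathcal{M})\cap L^2(0,T;H^1_0(\Omega))$ and $\Box u=0$), this yields $\ahform{u_h,\pi_h w}=-\gamma^*s^*(z_h,\pi_h w)$. Converting back from $a_h$ to $a$ using $\pi_h w|_{\partial\mathcal{M}}=0$ introduces only the boundary correction $(\spgrad \pi_h w\cdot n_{\partial\Omega},u_h)_{\Sigma}$, so
\[
\aform{u_h,\pi_h w}=-\gamma^*s^*(z_h,\pi_h w)+(\spgrad\pi_h w\cdot n_{\partial\Omega},u_h)_{\Sigma}.
\]
The dual stabilizer term is handled by Cauchy--Schwarz and the $H^1$-stability $\|\spgrad\pi_h w\|_{\mathcal{M}}\lesssim\|w\|_{H^1(\mathcal{M})}$ together with $\pi_h w|_{\partial\mathcal{M}}=0$, giving $|\pi_h w|_{s^*}\lesssim 1$. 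The $\Sigma$-boundary term is controlled by pairing the forward stabilizer component $\|h^{-1/2}u_h\|_{\Sigma}\le |u_h|_s$ with $\|h^{1/2}\spgrad\pi_h w\cdot n\|_{\Sigma}\lesssim\|\spgrad\pi_h w\|_{\mathcal{M}}\lesssim 1$, which follows from the discrete trace and inverse inequalities \eqref{trace_cont}--\eqref{inverse_disc} exactly as in \eqref{boundary_disc_bound}. Summing the bounds produces $|\aform{u_h,w}|\lesssim |u_h|_s+|z_h|_{s^*}\lesssim\tnorm{(u_h,z_h)}_S$.

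\medskip

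\noindent\textbf{Expected difficulty.} The only delicate point is the bookkeeping around the boundary: making sure that the identity $\ahform{u,\pi_h w}=0$ really uses both $\Box u=0$ \emph{and} the Dirichlet trace $u|_{\Sigma}=0$, and that converting $\ahform{u_h,\pi_h w}$ back to $\aform{u_h,\pi_h w}$ drops exactly one boundary term (the one involving $\pi_h w|_{\partial\mathcal{M}}=0$) while leaving the other to be absorbed by $\|h^{-1/2}u_h\|_{\Sigma}$. The interpolation/integration-by-parts bookkeeping for the $w-\pi_h w$ piece is routine, but care is needed because $\pi_h w$ lies in $V_h^1$ while $u_h\in V_h^p$, so the elementwise residuals $\Box u_h$ do not vanish.
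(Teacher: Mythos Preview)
Your proof is correct and follows essentially the same approach as the paper's: split $a(u_h,w)$ using the interpolant $\pi_h w$, integrate by parts elementwise to expose the volume residuals $\|h\Box u_h\|_{\mathcal{T}_h}$ and interior jumps $\|h^{1/2}\jump{A\spgrad u_h}\|_{\mathcal{F}_h}$, use the discrete equation (equivalently, Galerkin orthogonality plus consistency) to rewrite $\ahform{u_h,\pi_h w}$ as $-\gamma^*s^*(z_h,\pi_h w)$, and control the remaining $\Sigma$-term with $\|h^{-1/2}u_h\|_\Sigma$. The only cosmetic difference is that the paper combines $a(u_h,w-\pi_h w)$ with $a(u_h,\pi_h w)-a_h(u_h,\pi_h w)$ into a single four-term identity (labeled $I$--$IV$) and cancels the $\partial\mathcal{M}$ contributions between terms $II$ and $III$, whereas you invoke $\pi_h w|_{\partial\mathcal{M}}=0$ directly to drop those terms at the outset; both routes are equivalent.
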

\begin{proof}
First observe that 
\[
\|\Box (u-u_h)\|_{H^{-1}(\mathcal{M})} = \sup_{\substack{w \in
    H^1_0(\mathcal{M})\\
\|w\|_{H^1(\mathcal{M})}=1}}
\aform{u-u_h,w}.
\]
Since $\Box u=0$, we have $\aform{u,w}=0$ for all $w \in
H^1_0(\mathcal{M})$ thus establishing the first equality in the claim. Using
\eqref{eq:C0_forward} we see that,
\begin{equation}\label{eq:auhw}
\aform{u_h,w} = \aform{u_h,w-\pi_hw}+\aform{u_h,\pi_hw} -  \ahform{u_h,\pi_hw} + \gamma^* s^*(z_h,\pi_hw).
\end{equation}
Using integration by parts in the first term of the right hand side we
see that
\begin{multline*}
\aform{u_h,w-\pi_hw}+\aform{u_h,\pi_hw} -  \ahform{u_h,\pi_hw} = (\Box
u_h,w-\pi_hw)_{\mathcal{T}_h} + \sum_{K \in \mathcal{T}_h} (A \spgrad u_h \cdot
n_{\partial K} , w - \pi_hw)_{\partial K}\\+ (A \spgrad u_h \cdot
n_{\partial \mathcal{M}}, \pi_hw)_{\partial
  \mathcal{M}} + (u_h,\nabla \pi_hw \cdot n)_{\Sigma} = I+II+III+IV. 
\end{multline*}
For the term $I$ we have
\[
|I| = |(\Box
u_h,w-\pi_hw)_{\mathcal{T}_h}| \leq \|h \Box u_h\|_{\mathcal{T}_h}(h^{-1}\|w-\pi_hw\|_{\mathcal T_h}) \lesssim \|h \Box u_h\|_{\mathcal{T}_h}\|w\|_{H^1(\M)}.
\]
Observe that the term $III$ is absorbed by the same quantity
with opposite sign in $II$, eliminating all terms $\pi_h w$ on the
boundary. Since also
$w\vert_{\partial \mathcal{M}} = 0$, we see that 
\[
|II+III| =  |\sum_{K \in \mathcal{T}_h} (A \spgrad u_h \cdot
n_{\partial K} , w - \pi_hw)_{\partial K \setminus \partial \mathcal{M}}|
\leq \|h^{\frac12} \jump{A\spgrad u_h}\|_{\mathcal{F}_h} \|w\|_{H^1(\M)},
\]
where we are using \eqref{trace_cont} and the definition of the interpolant $\pi_h$.
Finally, for the term $IV$ we use the Cauchy-Schwarz inequality to get the bound
\[
 |IV|=|(\nabla_{t,x} \pi_hw \cdot n, u_h)_{\Sigma}| \leq \|h^{-\frac12} u_h\|_{\Sigma}\, \|h^{\frac12} \nabla_{t,x} \pi_hw \cdot n\|_{\Sigma}\lesssim \|h^{-\frac12} u_h\|_{\Sigma}\,\|w\|_{H^1(\M)},
\]
where we used the bound \eqref{boundary_disc_bound} in the last step.

Collecting the above bounds we have that
\[
\aform{u_h,w-\pi_hw}+\aform{u_h,\pi_hw} -  \ahform{u_h,\pi_hw}
\lesssim (\|h \Box u_h\|_{\mathcal{T}_h} +\|h^{\frac12}
\jump{A\spgrad u_h}\|_{\mathcal{F}_h} + \|h^{-\frac12} u_h\|_{\Sigma}) \|w\|_{H^1(\M)}.
\]
Using the definition of $\tnorm{(u_h,0)}_S$, we may rewrite this as
\[
\aform{u_h,w-\pi_hw}+\aform{u_h,\pi_hw} -  \ahform{u_h,\pi_hw}  \lesssim
\tnorm{(u_h,0)}_S \|w\|_{H^1(\Omega)}.
\]
For the remaining term in the right hand side of \eqref{eq:auhw} we observe that by
Cauchy-Schwarz inequality
\[
s^*(z_h,\pi_hw) \leq \tnorm{(0,z_h)}_S\, |\pi_hw|_{s^*} .
\]
We can now use \eqref{lem:stab_dual} to deduce
\begin{equation*}
 |\pi_hw|_{s^*} \lesssim \|\nabla_{t,x} w_h\|_{\mathcal{M}} + \|h^{-\frac12}
 \pi_hw\|_{\partial \mathcal{M}} \lesssim \|w\|_{H^1(\M)}.
\end{equation*}
\end{proof}

We now prove convergence in the residual norm.
\begin{prop}\label{prop:res_conv}
Let $u$ be the solution to \eqref{pf}, satisfying
\eqref{data_omega}. Let $(u_h,z_h) \in V_h^p\times V_h^q$ with $p\ge q
\ge 1$ be the solution of \eqref{eq:compact_form}. Then
\[
\tnorm{(u-u_h,z_h)}_S \lesssim h^p \|u\|_{H^{p+1}(\mathcal{M})}.
\]
\end{prop}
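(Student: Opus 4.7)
The plan is to run a standard coercivity argument in the residual norm $\tnorm{\cdot}_S$, using Galerkin orthogonality \eqref{eq:gal_ortho} against a carefully chosen discrete test pair. The main twist is that the exact solution $u$ is not in $V_h^p$, so we first decompose the primal error through the interpolant: set $\eta := u - \Pi_h u$ and $\xi_h := \Pi_h u - u_h \in V_h^p$, so that $u - u_h = \eta + \xi_h$. By the triangle inequality together with Lemma~\ref{approx_residue} applied to $\eta$, it suffices to prove
\[
\tnorm{(\xi_h, z_h)}_S \lesssim h^p \|u\|_{H^{p+1}(\mathcal{M})}.
\]

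The key algebraic observation is that testing $\mathcal{A}_h$ with the pair $(\xi_h, -z_h)$ produces a coercive quantity because the cross terms $a_h(\xi_h,z_h)$ appearing in $\mathcal{A}_h[(\xi_h,z_h),(\xi_h,-z_h)]$ cancel, giving
\[
\mathcal{A}_h[(\xi_h, z_h),(\xi_h,-z_h)] = \|\xi_h\|_{\mathcal{O}}^2 + \gamma |\xi_h|_s^2 + \gamma^*|z_h|_{s^*}^2 \gtrsim \tnorm{(\xi_h, z_h)}_S^2.
\]
Combining this with Galerkin orthogonality $\mathcal{A}_h[(u-u_h, z_h),(\xi_h,-z_h)] = 0$ and the splitting $u - u_h = \eta + \xi_h$, I get
\[
\tnorm{(\xi_h, z_h)}_S^2 \lesssim -\mathcal{A}_h[(\eta, 0),(\xi_h,-z_h)] = -(\eta,\xi_h)_{\mathcal{O}} - \gamma s(\eta,\xi_h) + a_h(\eta, z_h).
\]

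The first two terms on the right are handled by Cauchy--Schwarz in the $\|\cdot\|_{\mathcal{O}}$ and $|\cdot|_s$ seminorms respectively, yielding a bound of the form $\tnorm{(\eta,0)}_S \tnorm{(\xi_h, z_h)}_S$. The third term, which does not admit such an immediate bound since $\eta$ is non-discrete and $a_h$ includes boundary traces of derivatives, is exactly where Lemma~\ref{lem:cont1} is used: since $\eta \in H^{p+1}(\mathcal{M}) \subset H^2(\mathcal{M})$ and $z_h \in V_h^q$, we get $|a_h(\eta,z_h)| \lesssim \|\eta\|_*\, |z_h|_{s^*} \lesssim \|\eta\|_*\, \tnorm{(\xi_h,z_h)}_S$. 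Dividing through by $\tnorm{(\xi_h,z_h)}_S$ leaves
\[
\tnorm{(\xi_h, z_h)}_S \lesssim \tnorm{(\eta,0)}_S + \|\eta\|_*,
\]
at which point Lemmas~\ref{approx_residue} and \ref{approx_star} finish the estimate with the $h^p$ rate.

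The main obstacle, and the reason the two continuity norms $\tnorm{\cdot}_S$ and $\|\cdot\|_*$ were set up in advance, is the non-conformity on the primal side: because neither the wave equation nor the Dirichlet boundary condition is built into $V_h^p$, the interpolation error $\eta$ must be fed through $a_h$ without losing a power of $h$. Lemma~\ref{lem:cont1} is tailored to this, and matching it with the $\|\cdot\|_*$-approximation of $\eta$ is what produces the sharp rate. Once this single continuity estimate is in place, the rest of the argument is the standard symmetric-coercive saddle-point argument enabled by the sign-indefinite stabilization in $\mathcal{L}$.
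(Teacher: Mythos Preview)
Your proof is correct and follows essentially the same route as the paper's: the same interpolation splitting (the paper writes $e_\Pi,e_h$ for your $\eta,\xi_h$), the same coercivity test with $(\xi_h,-z_h)$, Galerkin orthogonality \eqref{eq:gal_ortho}, Cauchy--Schwarz on the $\mathcal{O}$ and $s$ terms, and Lemma~\ref{lem:cont1} for the $a_h(\eta,z_h)$ term, concluding via Lemmas~\ref{approx_residue} and~\ref{approx_star}. The exposition and emphasis differ, but the argument is the same.
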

\begin{proof}
Let $u-u_h = \underbrace{u-\Pi_h u}_{e_\Pi} + \underbrace{\Pi_h u - u_h}_{e_h} = e_{\Pi} + e_h$. 
Using the triangle inequality we write 
\[
\tnorm{(u-u_h,z_h)}_S \leq \tnorm{(e_\Pi,0)}_S+\tnorm{(e_h,z_h)}_S.
\]
Recalling Lemma \ref{approx_residue} we only need an estimate for 
$\tnorm{(e_h,z_h)}_S$.
There holds
\[
\tnorm{(e_h,z_h)}_S^2 \lesssim \mathcal{A}_h[(e_h,z_h),(e_h,-z_h)].
\]
Using Galerkin orthogonality \eqref{eq:gal_ortho} we have
\[
\mathcal{A}_h[(e_h,z_h),(e_h,-z_h)]= -\mathcal{A}_h[(e_\Pi,0),(e_h,-z_h)].
\]
By definition 
\[
\mathcal{A}_h[(e_\Pi,0),(e_h,-z_h)] = (e_\Pi,e_h)_{\mathcal{O}} + \gamma s(e_\Pi,e_h) 
- \ahform{e_\Pi,z_h}.
\]
As a consequence, applying the Cauchy-Schwarz inequality in the two
first terms in the right hand side and the continuity of Lemma
\ref{lem:cont1} in the last term we get the bound
\begin{multline*}
\mathcal{A}_h[(e_\Pi,0),(e_h,-z_h)]\lesssim (\|e_\Pi \|_{\mathcal{O}} +
|e_\Pi|_s + \|e_\Pi\|_*) ( \|e_h
\|_{\mathcal{O}}+|e_h|_s + |z_h|_{s^*})\\
\lesssim  (\|e_\Pi \|_{\mathcal{O}} +
|e_\Pi|_s + \|e_\Pi\|_*) \tnorm{(e_h,z_h)}_S.
\end{multline*}
Collecting the above bounds and applying Lemmas \ref{approx_residue}--\ref{approx_star} we
conclude that
\[
\tnorm{(e_h,z_h)}_S \lesssim  \|e_\Pi \|_{\mathcal{O}} +
|e_\Pi|_s + \|e_\Pi\|_*  \lesssim h^p \|u\|_{H^{p+1}(\mathcal M)}.
\]
\end{proof}
\begin{thm}
\label{t1}
Assume that the results of Theorem \ref{continuum} and Proposition
\ref{prop:res_conv} hold. Then we have the a posteriori error estimate
\[
\sup_{t \in [0,T]} \biggl(\|(u-u_h)(t,\cdot)\|_{L^2(\Omega)} + \|\partial_t
(u-u_h)(t,\cdot)\|_{H^{-1}(\Omega)}\biggr) \lesssim \left(\sum_{K \in
    \mathcal{T}} \eta_K^2 \right)^{\frac12}
\]
where 
\[
\eta_K^2 = \|u_h - u_\cO\|^2_{\mathcal{O} \cap K}+s_K(u_h,u_h) + s^*_K(z_h,z_h).
\]
In addition the following a priori error estimate holds for the primal variable\footnote{\, The convergence for the dual variable $z_h$ is given by Proposition~\ref{prop:res_conv}.}
\[
\sup_{t \in [0,T]} \biggl(\|(u-u_h)(t,\cdot)\|_{L^2(\Omega)} + \|\partial_t
(u-u_h)(t,\cdot)\|_{H^{-1}(\Omega)}\biggr)  \lesssim h^p \|u\|_{H^{p+1}(\mathcal M)}.
\]
\end{thm}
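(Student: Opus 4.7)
\medskip

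\noindent\textbf{Proof plan.} The natural strategy is to apply the continuum observability estimate of Theorem~\ref{continuum} to the error $e = u - u_h$ and then bound each term on its right-hand side by the residual norm $\tnorm{(u-u_h,z_h)}_S$. For the a priori estimate one then invokes Proposition~\ref{prop:res_conv}, while for the a posteriori estimate one recognises the local stabilizers $s_K(u_h,u_h)$, $s^*_K(z_h,z_h)$ and the local data misfit $\|u_h-u_{\mathcal O}\|_{\mathcal O\cap K}^2$ precisely as the elementwise contributions to $\tnorm{(u-u_h,z_h)}_S^2$.

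First I would verify the hypotheses required to apply Theorem~\ref{continuum} to $e$. The boundary trace $e|_{\Sigma} = -u_h|_{\Sigma}$ lies in $L^2(\Sigma)$ since $u_h$ is piecewise polynomial, and $\Box e = -\Box u_h$ is well-defined in $H^{-1}(\mathcal{M})$ by Lemma~\ref{lem:cont2}. The temporal regularity at $t=0$ follows from the fact that $u_h$ is globally continuous and piecewise smooth. Theorem~\ref{continuum} then gives
\[
\sup_{t\in[0,T]}\bigl(\|e(t,\cdot)\|_{L^2(\Omega)}+\|\partial_t e(t,\cdot)\|_{H^{-1}(\Omega)}\bigr)
\lesssim \|e\|_{L^2(\mathcal O)} + \|\Box e\|_{H^{-1}(\mathcal{M})} + \|e\|_{L^2(\Sigma)}.
\]

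Next I would bound each term on the right-hand side. The data-misfit term is controlled directly by the definition of the residual norm: $\|e\|_{L^2(\mathcal O)} = \|u - u_h\|_{\mathcal O} \le \tnorm{(u-u_h,z_h)}_S$. For $\|\Box e\|_{H^{-1}(\mathcal{M})}$, I apply Lemma~\ref{lem:cont2}, whose proof in fact yields the sharper bound $\|\Box(u-u_h)\|_{H^{-1}(\mathcal{M})} \lesssim |u_h|_s + |z_h|_{s^*}$; combining this with identity \eqref{s_on_u} ($|u_h|_s = |u-u_h|_s$) gives the bound by $\tnorm{(u-u_h,z_h)}_S$. Finally, the boundary term $\|e\|_{L^2(\Sigma)} = \|u_h\|_{L^2(\Sigma)}$ is controlled via the factor $h^{1/2}$: since $\|h^{-1/2}u_h\|_{\Sigma}$ is one of the summands in $|u_h|_s^2$, we get $\|u_h\|_{\Sigma} \lesssim h^{1/2}|u_h|_s \lesssim |u - u_h|_s$. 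Putting these three bounds together yields the master estimate
\[
\sup_{t\in[0,T]}\bigl(\|(u-u_h)(t,\cdot)\|_{L^2(\Omega)}+\|\partial_t(u-u_h)(t,\cdot)\|_{H^{-1}(\Omega)}\bigr) \lesssim \tnorm{(u-u_h,z_h)}_S.
\]

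The a posteriori claim then follows by observing that $\|u_h - u_{\mathcal O}\|_{\mathcal O\cap K} = \|u - u_h\|_{\mathcal O\cap K}$ (since $u|_{\mathcal O} = u_{\mathcal O}$) so that $\sum_K \eta_K^2 = \tnorm{(u-u_h,z_h)}_S^2$, using \eqref{s_on_u} once more. The a priori claim is immediate from Proposition~\ref{prop:res_conv}. The only subtle point of the argument is the regularity check needed to apply Theorem~\ref{continuum} to a piecewise polynomial $u_h$, which is precisely the content of Lemma~\ref{lem:cont2}; the remaining steps are bookkeeping.
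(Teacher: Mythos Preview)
Your proposal is correct and follows essentially the same route as the paper: apply Theorem~\ref{continuum} to $e=u-u_h$, bound $\|e\|_{\mathcal O}$, $\|e\|_{\Sigma}$ and $\|\Box e\|_{H^{-1}(\mathcal M)}$ by the residual quantities (the last via Lemma~\ref{lem:cont2}), identify the sum of $\eta_K^2$ with $\tnorm{(u-u_h,z_h)}_S^2$ using $u|_{\mathcal O}=u_{\mathcal O}$ and \eqref{s_on_u}, and conclude the a priori bound from Proposition~\ref{prop:res_conv}. The only cosmetic difference is that the paper establishes the a posteriori estimate first and then deduces the a priori one, whereas you first prove the ``master'' bound by $\tnorm{(u-u_h,z_h)}_S$ and read off both conclusions from it.
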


\begin{proof}
Taking the square of the inequality of Theorem \ref{continuum} we see
that with $e=u-u_h$
\[
\sup_{t \in [0,T]} (\|e(t,\cdot)\|_{L^2(\Omega)} + \|\partial_t
e(t,\cdot)\|_{H^{-1}(\Omega)})^2 \lesssim \|e\|_{\mathcal{O}}^2+\|\Box
e\|_{H^{-1}(\mathcal{M})}^2 + \|e\|_{\Sigma}^2.
\]
First we observe that
\[
\|e\|_{\mathcal{O}}^2 = \sum_{K \in \mathcal{T}_h} \|u_h - u_{\mathcal O}\|^2_{\mathcal{O} \cap K}
\]
and 
\[
\|e\|_{\Sigma}^2 = \sum_{K \in \mathcal{T}_h}
\|u_h\|^2_{\Sigma \cap K} \leq \sum_{K \in \mathcal{T}_h}
\|h^{-\frac12} u_h\|^2_{\Sigma \cap K} \leq |u_h|^2_{s}.
\]
Applying Lemma \ref{lem:cont2} we see that
\[
\|\Box
e\|_{H^{-1}(\mathcal{M})}^2 \lesssim \sum_{K \in
    \mathcal{T}_h} \eta_K^2
\]
which proves the first claim.

For the a priori error estimate observe that by definition and by \eqref{s_on_u} we have
\[
\left(\sum_{K \in \mathcal{T}_h} \eta_K^2\right)^{\frac12} \lesssim \|e\|_{\mathcal{O}} + \tnorm{(e,z_h)}_S
\]
and we conclude by applying the error bound of Proposition
\ref{prop:res_conv} to the right hand side. This concludes the proof.
\end{proof}
\begin{rmk}
\label{lowestq}
Observe that the preceding analysis shows that the order of the discretization space for the dual variable $z_h$, namely $q$, can be taken to be one without sacrificing any rate of convergence for the primal variable $u_h$. This is advantageous since then the system size only grows with increasing $p$.
\end{rmk}

\section{Numerical experiments}
\label{num_exp_sec}

We implement the stabilized finite element method introduced and analyzed in the previous sections. We also discuss the rate obtained according, notably, to the regularity of the initial condition to be reconstructed in the case $n=1$. We also compare the results from those obtained with the $H^2$-conformal finite element method introduced in \cite{CM15} which reads as follows: \\

\noindent Find $(u,z)\in V\times L^2(0,T; H^1_0(\Omega))$, with $V=C^1([0,T];H^{-1}(\Omega))\cap C([0,T];L^2(\Omega))$, solution of 

\begin{equation}
\left\{
\begin{aligned}
& (u,v)_{\mathcal{O}}+ \gamma\int_0^T (\Box u,\Box v)_{H^{-1}(\Omega)}dt + \int_0^T (z,\Box v)_{H_0^1(\Omega),H^{-1}(\Omega)}dt= (u_{\mathcal{O}},v)_{\mathcal{O}}, \quad \forall v\in V,\\
&  \int_0^T (w,\Box u)_{H_0^1(\Omega),H^{-1}(\Omega)}dt=0, \quad \forall w\in L^2(0,T;H^1_0(\Omega)),
\end{aligned}
\right.
\end{equation}
where $(\cdot,\cdot)_{H^1_0(\Omega),H^{-1}(\Omega)}$ denotes the dual pairing between $H^1_0(\Omega)$ and $H^{-1}(\Omega)$ so that 
$$
(z,\Box u)_{H_0^1(\Omega),H^{-1}(\Omega)}= \big(\nabla z, \nabla(-\Delta^{-1}   \Box u)\big)_{L^2(\Omega)}, \quad \forall z\in H^1_0(\Omega), u\in V.
$$
For any $\gamma\geq 0$, this well-posed mixed formulation is associated to the Lagrangian 
$$\widetilde{\mathcal{L}}:V\times L^2(0,T; H^1_0(\Omega))\to \mathbb{R}$$ defined as follows
\begin{equation}
\label{tildeL}
\widetilde{\mathcal{L}}(u,z)= \frac{1}{2}\Vert u-u_\cO\Vert^2_\mathcal{O}+ \frac{\gamma}{2}\Vert \Box u\Vert^2_{L^2(H^{-1})}-\int_0^T (z,\Box u)_{H_0^1(\Omega),H^{-1}(\Omega)}dt.
\end{equation}
At the finite dimensional level, the formulation reads: find $(u_h,z_h)\in V_h\times P_h$ solution of 
\begin{equation}
\label{FVh-HCT}
\left\{
\begin{aligned}
& (u_h,v_h)_{\mathcal{O}}+ \gamma h^2 (\Box u_h,\Box v_h)_{\mathcal{M}} + (z_h,\Box v_h)_{\mathcal{M}}= (u_\cO,v_h)_{\mathcal{O}}, \quad \forall v_h\in V_h,\\
& (w_h,\Box u_h)_{\mathcal{M}} =0, \quad \forall w_h\in P_h,
\end{aligned}
\right.
\end{equation}
where $V_h\subset V$ and $P_h\subset L^2(0,T; H^1_0(\Omega))$ for all
$h>0$. As in \cite{CM15}, we shall use a conformal approximation $V_h$
based on the $C^1$ triangular reduced HCT element (see
\cite{Bernadou_Hassan_1981}). Concerning the approximation of the
multiplier $z$, we consider $P_h=\{z\in C(\mathcal{M}):
u_{\vert K}\in \mathbb{P}^1(K), \forall K\in \mathcal{T}_h\}$. This
method does not enter the above framework, however if a dual
stabilizer as in \eqref{eq:dual_stab2} is added, the above
theory may be applied and leads to error bounds also in this case.

The experiments are performed with the FreeFem++ package developed at the University Paris 6 (see \cite{hecht2012}), very well-adapted to the spacetime formulation.  

\subsection{Example 1}
For our first example, we take simply $\Omega=(0,1)$ and first consider an observation $u_\cO$ based on the smooth initial condition $(u_0(x),u_1(x))=(\sin(3\pi x),0)$ completed with $T=2$ and  $\omega=(0.1,0.3)$, $\Omega=(0,1)$. Observe that the corresponding solution is simply the smooth function
$$
(\mathbf{Ex1}) \quad u(t,x)=\sin(3\pi x) \cos(3\pi t).
$$
In view of Theorem \ref{t1}, we expect a rate equal to $p$ for the primal variable $u$ when approximated with elements in $V_h^p$. Tables \ref{tab:ex1_zh}, \ref{tab:ex1_uh}, \ref{tab:ex1_u0h}, \ref{tab:ex1_u1h} provide the norms $$\Vert u-u_h \Vert_{L^2(\mathcal{M})}/\Vert u\Vert_{L^2(\mathcal{M})},\quad
\Vert (u-u_h)(0,\cdot) \Vert_{L^2(0,1)}/\Vert u(0,\cdot)\Vert_{L^2(0,1)},\quad \Vert (u-u_h)_t(0,\cdot) \Vert_{L^2(0,1)}$$ 
with respect to $h$ for the primal variable and the norm $\Vert z_h \Vert_{L^2(0,T; H^1_0(0,1))}$
with respect to $h$ for the dual variable. These are obtained from the formulation (\ref{FVh-HCT}) based on a conformal approximation with $\gamma=10^{-3}$ and from the formulation (\ref{eq:compact_form}) based on a non conformal approximation with $p,q\in \{1,2,3\}$ and $q\leq p$  (see \eqref{def_Vh}). For the latter, we use the dual stabilizer (\ref{eq:dual_stab2}) with 
$\gamma=10^{-3}$, $\gamma^\star=1$ and $h_F=h_K=h$. The linear system associated to the mixed formulation \eqref{eq:C0_forward}-\eqref{eq:C0_adjoint} is solved using a direct UMFPACK solver. 

Concerning the primal variable $u$, we obtain the following behavior $$\Vert u-u_h \Vert_{L^2(\mathcal{M})}/\Vert u\Vert_{L^2(\mathcal{M})}\approx \beta \times h^{\tau}$$ with 
\begin{equation}
\begin{aligned}
& \beta=e^{3.83}, &\tau=2.21 , \quad &(u_h,z_h)\in V_h^1\times V_h^1,\\
& \beta=e^{1.76}, &\tau=2.33 , \quad &(u_h,z_h)\in V_h^2\times V_h^1,\\
& \beta=e^{0.60}, &\tau=2.00, \quad &(u_h,z_h)\in V_h^3\times V_h^1,\\
& \beta=e^{2.63}, &\tau=2.57 , \quad &(u_h,z_h)\in V_h^2\times V_h^2,\\
& \beta=e^{0.19}, &\tau=1.98 , \quad &(u_h,z_h)\in V_h^3\times V_h^2,\\
& \beta=e^{0.33}, &\tau=2.01 , \quad &(u_h,z_h)\in V_h^3\times V_h^3,\\
& \beta=e^{0.32}, &\tau=2.99 , \quad &(u_h,z_h)\in HCT\times V_h^1,
\end{aligned}
\end{equation}
and we observe a rate close to $2$ for the approximation $V_h^p\times V_h^q$. The choice of $p$ mainly affects the constant $\beta$. In particular, we do not observe a rate equal to $3$ when $V_h^3$ is used; several reasons may explain this fact; i) numerical integration and approximation of $u_\mathcal{O}$ not taken into account in the analysis of Section \ref{error_estimate}; ii) bad conditioning of the square matrix associated to the mixed formulation \eqref{eq:C0_forward}-\eqref{eq:C0_adjoint}, iii) difficulty to mimic the $H^3(Q_T)$ regularity of $u$ from an approximation of $u_\mathcal{O}$. We also observe that the value of $q$ does not affect the rate in agreement with Remark \ref{lowestq}. Accordingly, the use of the space $V_h^2\times V_h^1$ seems very appropriate as it also leads to a reduced CPU time (see Table \ref{tab:cpu}).

On the other hand, the use of the $HCT$ element based on a $H^2(Q_T)$ approximation leads to a rate close to $3$.

Since $u_\mathcal{O}$ is a well-prepared solution, we check from Table \ref{tab:ex1_zh} that the approximation $z_h$ of the dual variable (which has the meaning of a Lagrange multiplier for the weak formulation of the wave equation) goes to zero with $h$ for the $L^2(0,T;H_0^1(0,1))$ norm. 

With respect to the role of $\gamma$ and $\gamma^\star$, here taken equal to $10^{-3}$ and $1$ respectively, we have observed the following phenomenon: when $p$ is strictly larger than $q$, i.e. when the primal variable is approximated in a richer space than the dual one, the value of $\gamma^{\star}$ has no influence on the quality of the result. In particular $\gamma^{\star}=0$ still leads to a well-posed discrete formulation and provides the same results compared to for instance $\gamma^{\star}=1$. Moreover, in that case, whatever be the value of $\gamma^\star$, $\gamma$ must be small but strictly positive; the choice $\gamma=0$ leads to a non invertible formulation. On the contrary, when the same finite element space is used for primal and dual variables, i.e. when $p=q$, we observe that the stabilization of the dual variable, i.e. $\gamma^\star>0$ is compulsory to achieve well-posedness. In that case, the choice $\gamma=0$ provides excellent results (except for $p=q=1$ and $h$ not small enough).  
We remark that similar qualitative and quantitative conclusions are observed with structured meshes.

\begin{table}[http]
	\centering
		\begin{tabular}{|c|ccccc|}
			\hline
			 			   mesh  & $\sharp 1$ & $\sharp 2$  &  $\sharp 3$ &   $\sharp 4$ & $\sharp 5$   
						   			\tabularnewline
			\hline
			$h$		  & $1.57\times 10^{-1}$  &      $8.22\times 10^{-2}$  	&  $4.03\times 10^{-2}$ 	&  $2.29\times 10^{-2}$  	&  $1.25\times 10^{-2}$ 	
			\tabularnewline
			$card(\mathcal{T}_h)$  &  $442$ &  $1750$  &  $7164$ &  $29182$ & $116300$ 	
			\tabularnewline
			$\sharp vertices$    & $252$ &     $936$ 		& $3703$ 				& $14832$ 	& $58631$ 
			\tabularnewline
			$card(V_h)$ - $\mathbb{P}_2$    & $945$ &     $3621$ 		& $14569$ 				& $58845$ 	& $233561$ 
			\tabularnewline
			$card(V_h)$ - $\mathbb{P}_3$    & $2080$ &     $8056$ 		& $32599$ 				& $132040$ 	& $524791$ 
			\tabularnewline
			$card(V_h)$ - $HCT$    & $1449$ &     $5493$ 		& $21975$ 				& $88509$ 	& $350823$ 
	                 \tabularnewline
															\hline
		\end{tabular}
		\vspace{0.1cm}
	\caption{Data of five triangular meshes associated to $\mathcal{M}=(0,1)\times (0,2)$.}
	\label{tab:1D_S_ex2_zh}
\end{table}

\begin{table}[http]
	\centering
		\begin{tabular}{|ccccccc|}
			\hline
			 			     $V_h^1\times V_h^1$ & $V_h^2\times V_h^1$  &  $V_h^2\times V_h^2$ &   $V_h^3\times V_h^1$ & $V_h^3\times V_h^2$ & $V_h^3\times V_h^3$   & $HCT\times V_h^1$
						   			\tabularnewline
			\hline
					   $1.74$  &      $4.78$  	&  $8.46$ 	&  $16.19$  	&  $19.27$ 	& $28.59$ & $10.76$
 			\tabularnewline
																		\hline
		\end{tabular}
		\vspace{0.1cm}
	\caption{$\mathbf{(Ex 1)}$; CPU time (in second) to solve (\ref{eq:C0_forward})-(\ref{eq:C0_adjoint})  with the mesh $\sharp 4$.}
	\label{tab:cpu}
\end{table}

\begin{table}[http!]
	\centering
		\begin{tabular}{|c|ccccc|}
			\hline
			 			   $h$  & $1.57\times 10^{-1}$  &      $8.22\times 10^{-2}$  	&  $4.03\times 10^{-2}$ 	&  $2.29\times 10^{-2}$  	&  $1.25\times 10^{-2}$  
						   			\tabularnewline
			\hline
			(\ref{FVh-HCT})	 - $HCT\times V_h^1$ & $6.41\times 10^{-5}$ &      $3.05\times 10^{-5}$ 	& $1.728\times 10^{-5}$	& $9.29\times 10^{-6}$ 		 		& $5.20\times 10^{-6}$ 	
			\tabularnewline
			(\ref{eq:compact_form}) - $V_h^1\times V_h^1$	   & $5.51\times 10^{-2}$ &     $5.65\times 10^{-2}$ 		& $2.20\times 10^{-2}$ 				& $8.15\times 10^{-3}$ 	& $2.89\times 10^{-3}$ 
			\tabularnewline
			(\ref{eq:compact_form}) - $V_h^2\times V_h^2$	   & $4.04\times 10^{-2}$ &     $1.14\times 10^{-2}$ 		& $2.67\times 10^{-3}$ 				& $7.26\times 10^{-4}$ 	& $1.98\times 10^{-4}$
			\tabularnewline
			(\ref{eq:compact_form}) - $V_h^3\times V_h^3$	   & $8.32\times 10^{-3}$ &     $1.21\times 10^{-3}$ 		& $2.30\times 10^{-4}$ 				& $7.95\times 10^{-5}$ 	& $4.71\times 10^{-5}$
			\tabularnewline
			(\ref{eq:compact_form}) - $V_h^2\times V_h^1$	   & $3.89\times 10^{-3}$ &     $8.30\times 10^{-4}$ 		& $2.12\times 10^{-4}$ 				& $6.84\times 10^{-5}$ 	& $2.28\times 10^{-5}$
			\tabularnewline
			(\ref{eq:compact_form}) - $V_h^3\times V_h^1$   & $1.83\times 10^{-3}$ &     $5.85\times 10^{-4}$ 		& $1.66\times 10^{-4}$ 				& $6.34\times 10^{-5}$ 	& $3.88\times 10^{-5}$
			\tabularnewline
			(\ref{eq:compact_form}) - $V_h^3\times V_h^2$	   & $3.13\times 10^{-3}$ &     $7.83\times 10^{-4}$ 		& $2.13\times 10^{-4}$ 				& $8.44\times 10^{-5}$ 	& $4.87\times 10^{-5}$
			\tabularnewline
												\hline
		\end{tabular}
		\vspace{0.1cm}
	\caption{$\mathbf{(Ex 1)}$; $\Vert z_h \Vert_{L^2(0,T;H^1_0(0,1))}$ w.r.t $h$.}
	\label{tab:ex1_zh}
\end{table}

\begin{table}[http]
	\centering
		\begin{tabular}{|c|ccccc|}
			\hline
			 			   $h$  & $1.57\times 10^{-1}$  &      $8.22\times 10^{-2}$  	&  $4.03\times 10^{-2}$ 	&  $2.29\times 10^{-2}$  	&  $1.25\times 10^{-2}$  
						   			\tabularnewline
			\hline
			(\ref{FVh-HCT})	 - $HCT\times V_h^1$	  & $6.46\times 10^{-3}$ &      $7.63\times 10^{-4}$ 	& $7.99\times 10^{-5}$	& $1..53\times 10^{-5}$ 		 		& $3.58\times 10^{-6}$ 	
			\tabularnewline
			(\ref{eq:compact_form}) - $V_h^1\times V_h^1$	   & $5.83\times 10^{-1}$ &     $2.26\times 10^{-2}$ 		& $4.75\times 10^{-2}$ 				& $1.07\times 10^{-2}$ 	& $2.31\times 10^{-3}$ 
			\tabularnewline
			(\ref{eq:compact_form}) - $V_h^2\times V_h^2$	   & $1.51\times 10^{-1}$ &     $1.90\times 10^{-2}$ 		& $2.86\times 10^{-3}$ 				& $7.68\times 10^{-4}$ 	& $2.17\times 10^{-4}$
			\tabularnewline
			(\ref{eq:compact_form}) - $V_h^3\times V_h^3$	   & $3.45\times 10^{-2}$ &     $8.88\times 10^{-3}$ 		& $2.16\times 10^{-3}$ 				& $7.04\times 10^{-4}$ 	& $3.81\times 10^{-4}$ 
			\tabularnewline
			(\ref{eq:compact_form}) - $V_h^2\times V_h^1$	   & $9.18\times 10^{-2}$ &     $1.48\times 10^{-2}$ 		& $2.80\times 10^{-3}$ 				& $8.01\times 10^{-4}$ 	& $2.42\times 10^{-4}$
			\tabularnewline
			(\ref{eq:compact_form}) - $V_h^3\times V_h^1$	   & $4.80\times 10^{-2}$ &     $1.17\times 10^{-2}$ 		& $2.81\times 10^{-3}$ 				& $9.93\times 10^{-4}$ 	& $5.34\times 10^{-4}$
			\tabularnewline
			(\ref{eq:compact_form}) - $V_h^3\times V_h^2$	   & $3.23\times 10^{-2}$ &     $8.83\times 10^{-3}$ 		& $2.16\times 10^{-3}$ 				& $7.05\times 10^{-4}$ 	& $3.82\times 10^{-4}$
			\tabularnewline

												\hline
		\end{tabular}
		\vspace{0.1cm}
	\caption{$\mathbf{(Ex 1)}$; $\Vert u-u_h \Vert_{L^2(\mathcal{M})}/\Vert u\Vert_{L^2(\mathcal{M})}$ w.r.t $h$.}
	\label{tab:ex1_uh}
\end{table}

\begin{table}[http]
	\centering
		\begin{tabular}{|c|ccccc|}
			\hline
			 			   $h$  & $1.57\times 10^{-1}$  &      $8.22\times 10^{-2}$  	&  $4.03\times 10^{-2}$ 	&  $2.29\times 10^{-2}$  	&  $1.25\times 10^{-2}$  
						   			\tabularnewline
			\hline
			(\ref{FVh-HCT})	- $HCT\times V_h^1$	  & $8.58\times 10^{-2}$ &      $2.06\times 10^{-2}$ 	& $5.08\times 10^{-3}$	& $1.26\times 10^{-3}$ 		 		& $3.16\times 10^{-3}$ 	
			\tabularnewline
			(\ref{eq:compact_form}) - $V_h^1\times V_h^1$	   & $3.58\times 10^{-1}$ &     $1.47\times 10^{-1}$ 		& $3.18\times 10^{-2}$ 				& $7.79\times 10^{-3}$ 	& $1.70\times 10^{-3}$ 
			\tabularnewline
			(\ref{eq:compact_form}) - $V_h^2\times V_h^2$	   & $1.05\times 10^{-1}$ &     $1.33\times 10^{-2}$ 		& $1.98\times 10^{-3}$ 				& $5.39\times 10^{-4}$ 	& $1.52\times 10^{-4}$
			\tabularnewline
			(\ref{eq:compact_form}) - $V_h^3\times V_h^3$	   & $2.41\times 10^{-2}$ &     $6.20\times 10^{-3}$ 		& $1.52\times 10^{-3}$ 				& $4.96\times 10^{-4}$  	& $2.68\times 10^{-4}$  
			\tabularnewline
			(\ref{eq:compact_form}) - $V_h^2\times V_h^1$	   & $4.23\times 10^{-2}$ &     $1.19\times 10^{-2}$ 		& $3.34\times 10^{-3}$ 				& $9.81\times 10^{-4}$ 	& $2.81\times 10^{-4}$
			\tabularnewline
			(\ref{eq:compact_form}) - $V_h^3\times V_h^1$	   & $4.39\times 10^{-2}$ &     $1.19\times 10^{-2}$ 		& $3.31\times 10^{-3}$ 				& $9.21\times 10^{-4}$ 	& $4.76\times 10^{-4}$
			\tabularnewline
			(\ref{eq:compact_form}) - $V_h^3\times V_h^2$	   & $2.33\times 10^{-2}$ &     $6.54\times 10^{-3}$ 		& $1.66\times 10^{-3}$ 				& $5.67\times 10^{-4}$ 	& $3.07\times 10^{-4}$
			\tabularnewline

												\hline
		\end{tabular}
		\vspace{0.1cm}
	\caption{$\mathbf{(Ex 1)}$; $\Vert (u-u_h)(\cdot,0) \Vert_{L^2(0,1)}/\Vert u(\cdot,0) \Vert_{L^2(0,1)}$ w.r.t. $h$.}
	\label{tab:ex1_u0h}
\end{table}

\begin{table}[http]
	\centering
		\begin{tabular}{|c|ccccc|}
			\hline
			 			   $h$  & $1.57\times 10^{-1}$  &      $8.22\times 10^{-2}$  	&  $4.03\times 10^{-2}$ 	&  $2.29\times 10^{-2}$  	&  $1.25\times 10^{-2}$  
						   			\tabularnewline
			\hline
			(\ref{FVh-HCT})	- $HCT\times V_h^1$	  & $1.75\times 10^{-1}$ &      $9.29\times 10^{-2}$ 	& $3.81\times 10^{-2}$	& $1.85\times 10^{-2}$ 		 		& $8.96\times 10^{-3}$ 	
			\tabularnewline
			(\ref{eq:compact_form}) - $V_h^1\times V_h^1$	   & $4.74\times 10^{-2}$ &     $7.70\times 10^{-2}$ 		& $3.20\times 10^{-2}$ 				& $1.21\times 10^{-2}$ 	& $5.02\times 10^{-3}$ 
			\tabularnewline
			(\ref{eq:compact_form}) - $V_h^2\times V_h^2$	   & $3.06\times 10^{-2}$ &     $8.06\times 10^{-3}$ 		& $1.99\times 10^{-3}$ 				& $4.77\times 10^{-4}$ 	& $1.14\times 10^{-4}$ 
			\tabularnewline
			(\ref{eq:compact_form}) - $V_h^3\times V_h^3$	   & $6.48\times 10^{-3}$ &     $8.26\times 10^{-4}$ 		& $1.79\times 10^{-4}$ 				& $7.03\times 10^{-5}$ 	& $4.35\times 10^{-5}$ 
			\tabularnewline
			(\ref{eq:compact_form}) - $V_h^2 \times V_h^1$	   & $3.46\times 10^{-2}$ &     $1.05\times 10^{-2}$ 		& $2.64\times 10^{-3}$ 				& $8.63\times 10^{-4}$ 	& $2.74\times 10^{-4}$
			\tabularnewline
			(\ref{eq:compact_form}) - $V_h^3 \times V_h^1$	   & $4.01\times 10^{-2}$ &     $1.29\times 10^{-2}$ 		& $3.78\times 10^{-3}$ 				& $1.47\times 10^{-3}$ 	& $8.35\times 10^{-4}$
			\tabularnewline
			(\ref{eq:compact_form}) - $V_h^3 \times V_h^2$	   & $5.48\times 10^{-3}$ &     $1.25\times 10^{-3}$ 		& $4.10\times 10^{-4}$ 				& $1.61\times 10^{-4}$ 	& $8.78\times 10^{-5}$
			\tabularnewline

												\hline
		\end{tabular}
		\vspace{0.1cm}
	\caption{$\mathbf{(Ex 1)}$; $\Vert (u-u_h)_t(\cdot,0) \Vert_{H^{-1}(0,1)}$ w.r.t. $h$.}
	\label{tab:ex1_u1h}
\end{table}

\begin{figure}[!http]
\begin{center}
\includegraphics[scale=0.65]{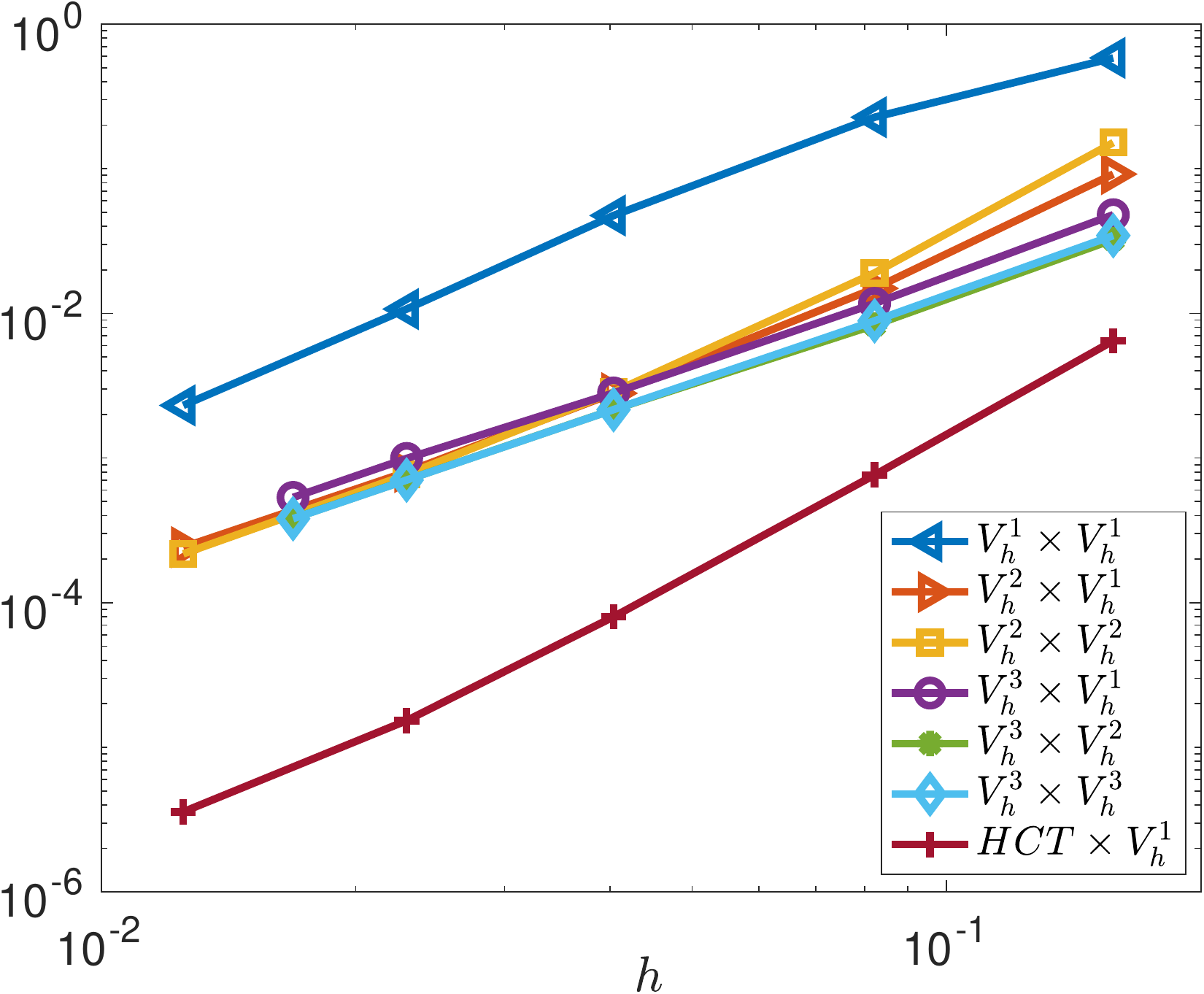}
\caption{(\textbf{Ex1})- Relative error $\Vert u-u_h \Vert_{L^2(\mathcal{M})}/\Vert u\Vert_{L^2(\mathcal{M})}$ with respect to $h$ for various approximation spaces (see Table \ref{tab:ex1_uh}).}\label{erreur_normL2_ex1-crop}
\end{center}
\end{figure}


When a richer space is used to approximate the dual variable $z$ than the primal one, we observe a locking phenomenon leading to unsatisfactory results. For instance, considering the approximation $V_h^1\times V_h^2$, and the mesh size $\sharp 4$, we obtain $\Vert z_h \Vert_{L^2(H^1_0)}=1.24\times 10^{-1}$ and $\Vert u-u_h \Vert_{L^2(\mathcal{M})}/\Vert u\Vert_{L^2(\mathcal{M})}=3.62\times 10^{-1}$ to be compare with the values $6.84\times 10^{-5}$ and $8.01\times 10^{-4}$ for the $V_h^2\times V_h^1$ approximation. This property, also observed for instance for $\gamma=1.$ and $\gamma^\star=10^{-3}$, seems independent of the values of $\gamma$ and $\gamma^{\star}$. 

To conclude this example, we emphasize that the spacetime discretization introduced in the previous sections is very well-appropriated for mesh adaptivity. Using the $V_h^2\times V_h^1$ approximation, Figure \ref{adaptmesh_ex}-left depicts the mesh obtained after seven adaptative refinements based on the local values of gradient of the primal variable $u_h$. Starting with a coarse mesh composed of $288$ triangles and $166$ vertex, the final mesh is composed with $13068$ triangles and $6700$ vertices. We obtain the following values:  $\Vert z_h \Vert_{L^2(H^1_0)}=3.76\times 10^{-3}$;  $\Vert u-u_h \Vert_{L^2(\mathcal{M})}/\Vert u\Vert_{L^2(\mathcal{M})}=3.54\times 10^{-2}$;  $\Vert (u-u_h)(0) \Vert_{L^2(0,1)}/\Vert u(0) \Vert_{L^2(0,1)}=2.48\times 10^{-2}$; $\Vert (u-u_h)_t(0) \Vert_{H^{-1}(0,1)}=3.36\times 10^{-3}$ for a CPU time equal to $2.17$.

%
%

\subsection{Example 2}


For our second numerical example, we consider the observation $u_\cO$ based on the initial condition $u_0(x)=1-\vert 2x-1\vert \in H^1_0(\Omega)$,  $u_1(x)=1_{(1/3,2/3)}(x)\in L^2(\Omega)$ and $T=2$, $\omega=(0.1,0.3)$, considered in \cite[section 5.1]{CM15}. The corresponding solution $u$ belongs to $H^1(\mathcal{M})$ but not in $H^2(\mathcal{M})$ and is given by 
$$
\mathbf{(Ex 2)} \quad
\left\{
\begin{aligned}
& u(t,x)=\sum_{k>0} \biggl(a_k \cos(k\pi t)+\frac{b_k}{k\pi}\sin(k\pi t)\biggr)\sqrt{2}\sin(k\pi t),\\ 
& a_k=\frac{4\sqrt{2}}{\pi^2 k^2} \sin(\pi k/2), \quad b_k=\frac{1}{\pi k}\big(\cos(\pi k/3)-\cos(2\pi k/3) \big), \quad k>0. 
\end{aligned}
\right.
$$

We define the observation $u_{\mathcal{O}}$ as the restriction over $(0.1,0.3)\times (0,2)$ of the first fifty terms in the previous sum. Tables \ref{tab:ex2_zh}, \ref{tab:ex2_uh}, \ref{tab:ex2_u0h}, \ref{tab:ex2_u1h} provide the norms $$\Vert u-u_h \Vert_{L^2(\mathcal{M})}/\Vert u\Vert_{L^2(\mathcal{M})},\quad
\Vert (u-u_h)(0,\cdot) \Vert_{L^2(0,1)},\quad\Vert (u-u_h)_t(0,\cdot) \Vert_{L^2(0,1)}$$ with respect to $h$ for the primal variable and $\Vert z_h \Vert_{L^2(H^1_0)}$
with respect to $h$ for the dual variable, obtained from the formulation (\ref{FVh-HCT}) and from the formulation (\ref{eq:compact_form}) with $p\in \{1,2,3\}, q\in \{1,2\}$ and $q<p$  (see \eqref{def_Vh}). We use again the dual stabilizer (\ref{eq:dual_stab2}) with 
$\gamma=10^{-3}$ and $\gamma^\star=1.$ and $h_F=h_K=h$. 

In agreement with Theorem \ref{t1}, the rate of convergence with respect to $h$ depends on the regularity of the solution $u$: concerning the primal variable $u$, we obtain the following behavior $\Vert u-u_h \Vert_{L^2(\mathcal{M})}/\Vert u\Vert_{L^2(\mathcal{M})}\approx \beta \times h^{\tau} $ with 
\begin{equation}
\begin{aligned}
& \beta=e^{-0.51}, &\tau=1.09 , \quad &(u_h,z_h)\in V_h^1\times V_h^1,\\
& \beta=e^{-0.87}, &\tau=1.29 , \quad &(u_h,z_h)\in V_h^2\times V_h^1,\\
& \beta=e^{-0.74}, &\tau=1.55, \quad &(u_h,z_h)\in V_h^3\times V_h^1,\\
& \beta=e^{-1.21}, &\tau=1.04, \quad &(u_h,z_h)\in V_h^2\times V_h^2,\\
& \beta=e^{-2.38}, &\tau=0.86, \quad &(u_h,z_h)\in HCT\times V_h^1.
\end{aligned}
\end{equation}
Since the solution to be reconstructed is only in $H^1(\mathcal{M})$, the $H^2$ approximation based on the $HCT$ composite finite element is asymptotically less accurate than the polynomial approximation  $V_h^p\times V_h^q$, $p>q$.  We also check that increasing the order of the space for the dual variable does not improve the accuracy. Moreover, we observe the same property as the first example with respect to the choice of the parameter $\lambda$ and $\lambda^\star$. 

We remark that, since the solution $u$ to be reconstructed, develops singularities along characteristic lines starting from the point $x=1/2$ (due to the initial position $u_0$) and from the points $x=1/3,2/3$ (due to the initial velocity $u_1$), the adaptative refinement of the mesh mentioned in the previous subsection is of particular interest here. Using the $V_h^2\times V_h^1$ approximation, Figure \ref{adaptmesh_ex}-left depicts the mesh obtained after ten adaptative refinements based on the local values of gradient of the primal variable $u_h$. Starting with a coarse mesh composed of $288$ triangles and $166$ vertex, the final mesh is composed with $12118$ triangles and $6213$ vertices. We obtain the following values:  $\Vert z_h \Vert_{L^2(H^1_0)}=2.36\times 10^{-5}$;  $\Vert u-u_h \Vert_{L^2(\mathcal{M})}/\Vert u\Vert_{L^2(\mathcal{M})}=1.63\times 10^{-3}$;  $\Vert (u-u_h)(0,\cdot) \Vert_{L^2(0,1)}/\Vert u(0,\cdot) \Vert_{L^2(0,1)}=9.69\times 10^{-4}$; $\Vert (u-u_h)_t(0,\cdot) \Vert_{H^{-1}(0,1)}/\Vert u_t(0,\cdot) \Vert_{H^{-1}(0,1)}=4.29\times 10^{-1}$ for a CPU time equal to $2.54$. The final mesh clearly exhibits the singularities generated by the initial data $(u_0,u_1)$. On the contrary, the refinement strategy coupled with the HCT element does not permit to capture so clearly such singularities, in particular the weaker ones starting from the point $x=1/3$ and $x=2/3$ (see \cite[Figure 1]{CM15}).


\begin{table}[http!]
	\centering
		\begin{tabular}{|c|ccccc|}
			\hline
			 			   $h$  & $1.57\times 10^{-1}$  &      $8.22\times 10^{-2}$  	&  $4.03\times 10^{-2}$ 	&  $2.29\times 10^{-2}$  	&  $1.25\times 10^{-2}$  
						   			\tabularnewline
			\hline
			(\ref{FVh-HCT})	-$HCT \times V_h^1$  & $6.21\times 10^{-5}$ &      $9.57\times 10^{-5}$ 	& $1.42\times 10^{-4}$	& $1.43\times 10^{-4}$ 		 		& $1.23\times 10^{-4}$ 	
			\tabularnewline
			(\ref{eq:compact_form}) - $V_h^1\times V_h^1$	   & $1.19\times 10^{-2}$ &     $8.21\times 10^{-3}$ 		& $3.22\times 10^{-3}$ 				& $1.63\times 10^{-3}$ 	& $8.71\times 10^{-4}$ 
			\tabularnewline
			(\ref{eq:compact_form}) - $V_h^2 \times V_h^2$	   & $4.16\times 10^{-3}$ &     $1.97\times 10^{-3}$ 		& $1.00\times 10^{-3}$ 				& $5.42\times 10^{-4}$ 	& $2.95\times 10^{-4}$
			\tabularnewline
			(\ref{eq:compact_form}) - $V_h^2 \times V_h^1$	   & $6.99\times 10^{-4}$ &     $2.85\times 10^{-4}$ 		& $1.29\times 10^{-4}$ 				& $6.95\times 10^{-5}$ 	& $3.25\times 10^{-5}$
			\tabularnewline
			(\ref{eq:compact_form}) - $V_h^3 \times V_h^1$   & $4.63\times 10^{-4}$ &     $1.60\times 10^{-4}$ 		& $5.40\times 10^{-5}$ 				& $2.31\times 10^{-5}$ 	& $1.37\times 10^{-5}$
			\tabularnewline
												\hline
		\end{tabular}
		\vspace{0.1cm}
	\caption{$\mathbf{(Ex 2)}$; $\Vert z_h \Vert_{L^2(0,T;H^1_0(0,1))}$ w.r.t. $h$.}
	\label{tab:ex2_zh}
\end{table}

\begin{table}[http!]
	\centering
		\begin{tabular}{|c|ccccc|}
			\hline
			 			   $h$  & $1.57\times 10^{-1}$  &      $8.22\times 10^{-2}$  	&  $4.03\times 10^{-2}$ 	&  $2.29\times 10^{-2}$  	&  $1.25\times 10^{-2}$  
						   			\tabularnewline
			\hline
			(\ref{FVh-HCT})	-$HCT \times V_h^1$	  & $1.97\times 10^{-2}$ &      $9.72\times 10^{-3}$ 	& $5.59\times 10^{-3}$	& $3.65\times 10^{-3}$ 		 		& $2.08\times 10^{-3}$	
			\tabularnewline
			(\ref{eq:compact_form}) - $V_h^1\times V_h^1$	   & $7.96\times 10^{-2}$ &     $3.86\times 10^{-2}$ 		& $1.79\times 10^{-2}$ 				& $9.40\times 10^{-3}$ 	& $5.01\times 10^{-3}$ 
			\tabularnewline
			(\ref{eq:compact_form}) - $V_h^2 \times V_h^2$	   & $4.16\times 10^{-2}$ &     $2.22\times 10^{-2}$ 		& $1.06\times 10^{-2}$ 				& $5.66\times 10^{-3}$ 	& $2.99\times 10^{-3}$
			\tabularnewline
			(\ref{eq:compact_form}) - $V_h^2 \times V_h^1$	   & $3..54\times 10^{-2}$ &     $1.62\times 10^{-2}$ 		& $6.94\times 10^{-3}$ 				& $3.44\times 10^{-3}$ 	& $1.24\times 10^{-3}$
			\tabularnewline
			(\ref{eq:compact_form}) - $V_h^3 \times V_h^1$   & $2.47\times 10^{-2}$ &     $9.70\times 10^{-3}$ 		& $3.83\times 10^{-3}$ 				& $1.26\times 10^{-3}$ 	& $4.87\times 10^{-4}$
			\tabularnewline
												\hline
		\end{tabular}
		\vspace{0.1cm}
	\caption{$\mathbf{(Ex 2)}$; $\Vert (u-u_h) \Vert_{L^2(\mathcal{M})}/\Vert u \Vert_{L^2(\mathcal{M})}$ w.r.t. $h$.}
	\label{tab:ex2_uh}
\end{table}

\begin{table}[http!]
	\centering
		\begin{tabular}{|c|ccccc|}
			\hline
			 			   $h$  & $1.57\times 10^{-1}$  &      $8.22\times 10^{-2}$  	&  $4.03\times 10^{-2}$ 	&  $2.29\times 10^{-2}$  	&  $1.25\times 10^{-2}$  
						   			\tabularnewline
			\hline
			(\ref{FVh-HCT})	-$HCT\times V_h^1$	  & $1.53\times 10^{-2}$ &      $8.76\times 10^{-3}$ 	& $5.06\times 10^{-3}$	& $3.24\times 10^{-3}$ 		 		& $1.90\times 10^{-3}$ 
			\tabularnewline
			(\ref{eq:compact_form}) - $V_h^1\times V_h^1$	   & $6.74\times 10^{-2}$ &     $3.33\times 10^{-2}$ 		& $1.52\times 10^{-2}$ 				& $8.23\times 10^{-3}$ 	& $4.45\times 10^{-3}$ 
			\tabularnewline
			(\ref{eq:compact_form}) - $V_h^2\times V_h^2$	   & $3.96\times 10^{-2}$ &     $2.17\times 10^{-2}$ 		& $1.03\times 10^{-2}$ 				& $5.31\times 10^{-3}$ 	& $2.64\times 10^{-3}$
			\tabularnewline
			(\ref{eq:compact_form}) - $V_h^2\times V_h^1$	   & $3.31\times 10^{-2}$ &     $1.55\times 10^{-2}$ 		& $6.45\times 10^{-3}$ 				& $2.95\times 10^{-3}$ 	& $1.58\times 10^{-3}$
			\tabularnewline
			(\ref{eq:compact_form}) - $V_h^3\times V_h^1$   & $2.18\times 10^{-2}$ &     $8.95\times 10^{-3}$ 		& $3.29\times 10^{-3}$ 				& $1.58\times 10^{-3}$ 	& $1.67\times 10^{-3}$
			\tabularnewline
												\hline
		\end{tabular}
		\vspace{0.1cm}
	\caption{$\mathbf{(Ex 2)}$; $\Vert (u-u_h)(\cdot,0) \Vert_{L^2(0,1)}/\Vert u(\cdot,0) \Vert_{L^2(0,1)}$ w.r.t. $h$.}
	\label{tab:ex2_u0h}
\end{table}

\begin{table}[http!]
	\centering
		\begin{tabular}{|c|ccccc|}
			\hline
			 			   $h$  & $1.57\times 10^{-1}$  &      $8.22\times 10^{-2}$  	&  $4.03\times 10^{-2}$ 	&  $2.29\times 10^{-2}$  	&  $1.25\times 10^{-2}$ 
						   			\tabularnewline
			\hline
			(\ref{FVh-HCT})	-$HCT \times V_h^1$	  & $9.33\times 10^{-1}$ &      $6.33\times 10^{-1}$ 	& $5.44\times 10^{-1}$	& $4.79\times 10^{-1}$ 		 		& $4.54\times 10^{-1}$ 
			\tabularnewline
			(\ref{eq:compact_form}) - $V_h^1\times V_h^1$	   & $8.99\times 10^{-1}$ &     $6.07\times 10^{-1}$ 		& $4.98\times 10^{-1}$ 				& $4.62\times 10^{-1}$ 	& $4.41\times 10^{-1}$ 
			\tabularnewline
			(\ref{eq:compact_form}) - $V_h^2\times V_h^2$	   & $4.74\times 10^{-1}$ &     $4.20\times 10^{-1}$ 		& $4.41\times 10^{-1}$ 				& $4.26\times 10^{-1}$ 	& $4.34\times 10^{-1}$
			\tabularnewline
			(\ref{eq:compact_form}) - $V_h^2\times V_h^1$	   & $4.62\times 10^{-1}$ &     $4.22\times 10^{-1}$ 		& $4.41\times 10^{-1}$ 				& $4.26\times 10^{-1}$ 	& $4.34\times 10^{-1}$
			\tabularnewline
			(\ref{eq:compact_form}) - $V_h^3\times V_h^1$   & $5.02\times 10^{-1}$ &     $4.24\times 10^{-1}$ 		& $4.44\times 10^{-1}$ 				& $4.28\times 10^{-1}$ 	& $4.27\times 10^{-1}$
			\tabularnewline
												\hline
		\end{tabular}
		\vspace{0.1cm}
	\caption{$\mathbf{(Ex 2)}$; $\Vert (u-u_h)_t(\cdot,0) \Vert_{H^{-1}(0,1)}/\Vert (u_t(\cdot,0) \Vert_{H^{-1}(0,1)}$ w.r.t. $h$.}
	\label{tab:ex2_u1h}
\end{table}

\begin{figure}[!http]
\begin{center}
\includegraphics[scale=0.65]{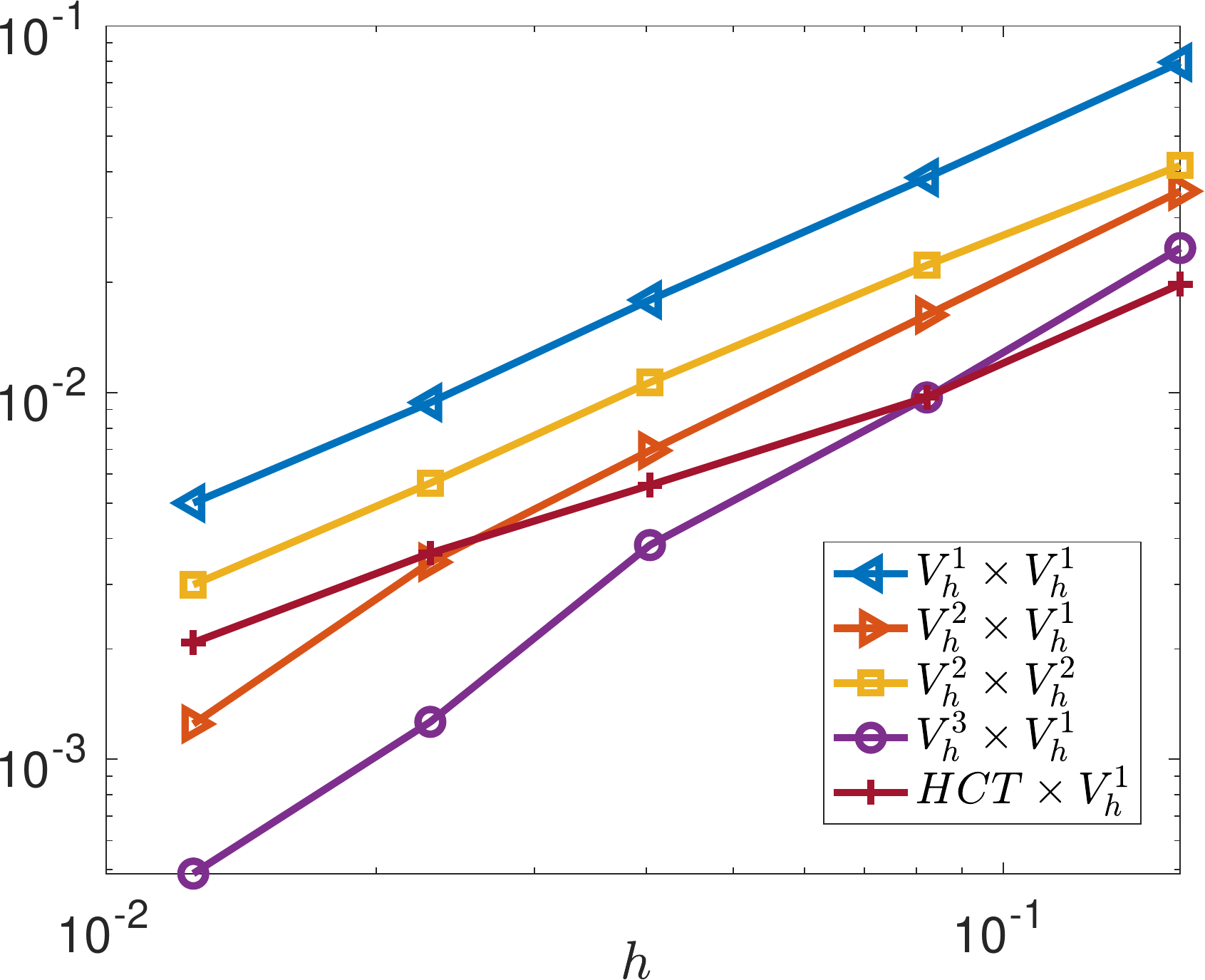}
\caption{(\textbf{Ex2})- Relative error $\Vert u-u_h \Vert_{L^2(\mathcal{M})}/\Vert u\Vert_{L^2(\mathcal{M})}$ with respect to $h$ for various approximation spaces (see Table \ref{tab:ex2_uh}).}\label{erreur_normL2_ex2-crop}
\end{center}
\end{figure}



\begin{figure}[!http]
\begin{center}
\includegraphics[scale=1.]{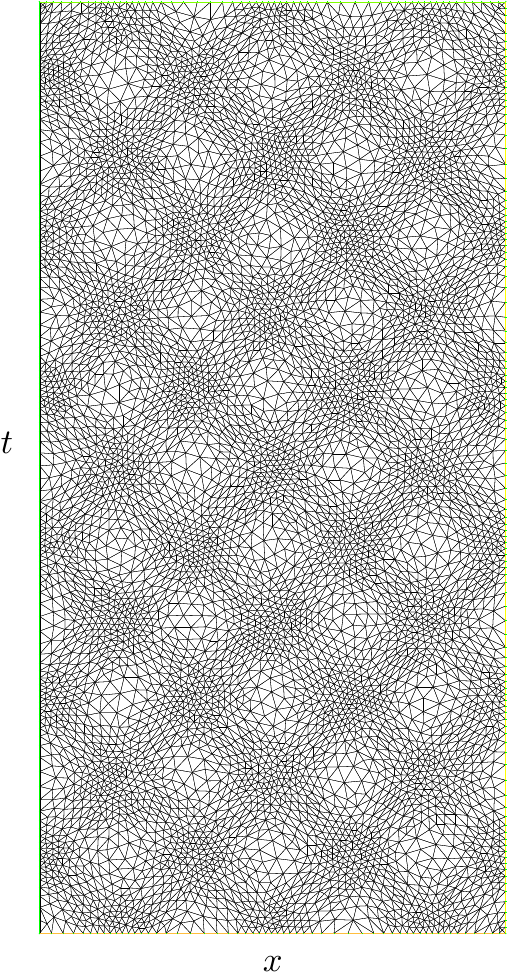}\hspace*{1.5cm}
\includegraphics[scale=0.7]{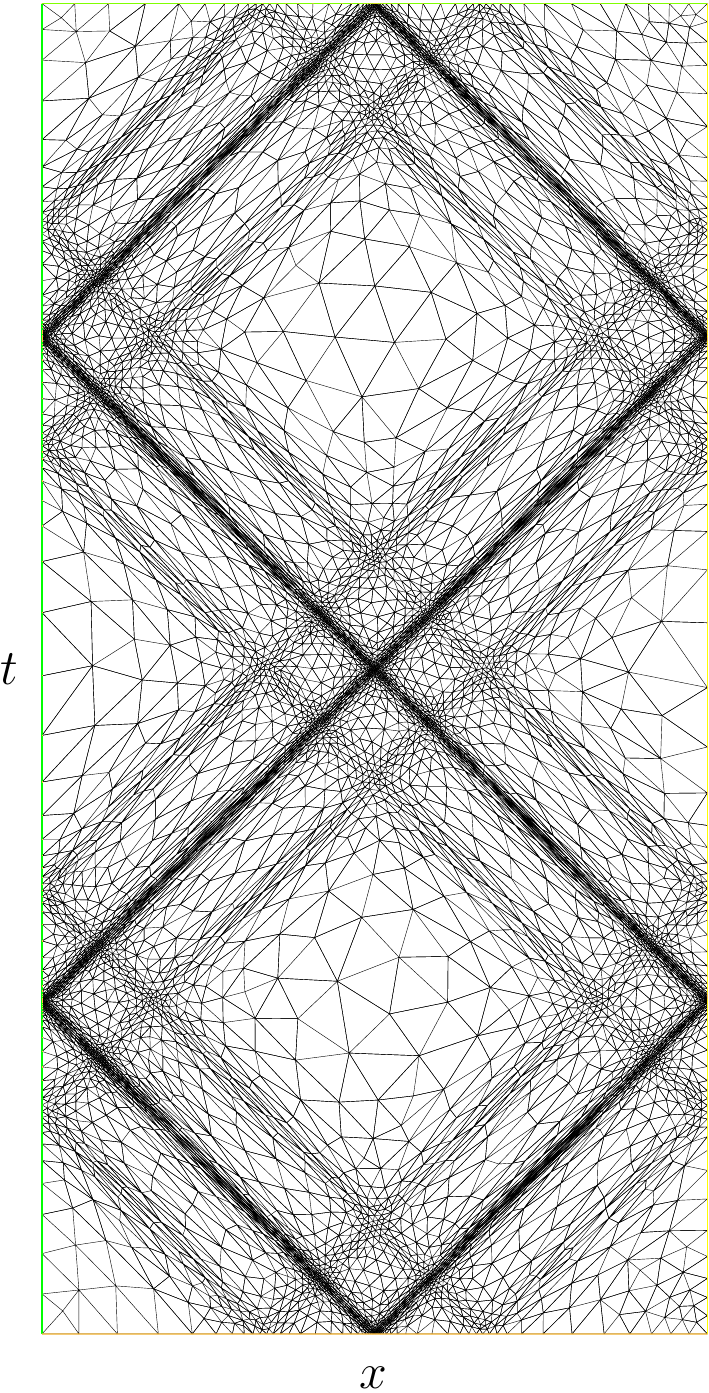}
\caption{Locally refine spacetime meshes for the example 1 (Left) and the example 2 (Right).}\label{adaptmesh_ex}
\end{center}
\end{figure}



\section{Concluding remarks}\label{conclusion}

We have introduced and analyzed a spacetime finite element
approximation of a data assimilation problem for the wave
equation. Based on an $H^1$-approximation that is nonconformal in $H^2$, the analysis yields error estimates for the natural norm $$L^\infty(0,T;L^2(\Omega))\cap H^1(0,T;H^{-1}(\Omega))$$ of order $h^p$, where $p$ is the degree of the polynomials used to describe the primal variable $u$ to be reconstructed. The numerical experiments performed for two initial data, the first one in $H^k\times H^{k-1}$ for all $k\in \mathbb{N}$, the second one in $H^1\times L^2$, exhibit the efficiency of the method. 

We emphasize that spacetime formulations are easier to implement than time-marching methods, since in particular, there is no kind of CFL condition between the time and space discretization parameters. Moreover, as shown in the numerical section, it is well-suited for mesh adaptivity. 

In comparison with the formulation introduced in \cite{CM15}, the
$H^1$-formulation of the present work does not require the
introduction of sophisticated finite element
spaces. On the other hand, the formulation requires additional
stabilized terms which are function of the jump of the gradient across
the boundary of each element, see the definition of $s_k$ in
(\ref{eq:forward_stab}). These kinds of terms are known from
non-conforming approximation of fourth order problems \cite{EGHL02}. 
So the approach can be interpreted as a non-conforming, stabilized version of the
method in \cite{CM15}.
The implementation of the stabilized terms
is not straightforward in particular, in higher dimension, and is usually not available in finite element softwares.
In such cases one can apply the so-called
orthogonal subscale stabilization \cite{RC00}, which can be shown to be
equivalent, but requires the introduction of additional degrees of
freedom, one for each component of the spacetime gradient. Another possible way to circumvent the introduction of the
gradient jump terms is to consider non-conforming approximation of the
Crouzeix-Raviart type as in \cite{Bu17nc}. A penalty is then needed on the solution jump instead to
control the $H^1$-conformity error. For the time discretization one could also explore the
possibility of using discontinuous Petrov-Galerkin methods (see for instance \cite{SZ18, EW19}).

The analysis performed can easily be extended to more general wave equations of the form $$u_{tt}-div(a(x)\nabla u) +p(x,t)u=0$$ with $a\in L^\infty(\Omega,\mathbb{R}^+_\star)$ and $p\in L^{\infty}(\mathcal{M})$ allowing to consider, through appropriate linearization techniques, data assimilation problem for nonlinear wave equation of the form $u_{tt}-\Delta u +g(u)=0$. From application viewpoint, it is also interesting to check if a spacetime approach based on a non conformal $H^1$-approximation can be efficient to address data assimilation problem from boundary observation. We refer to \cite{CM15b} where a $H^2$ conformal approximation -similar to \eqref{FVh-HCT} - is discussed, assuming that the normal derivative $\partial_{\nu}u \in L^2(\Sigma)$ is available on a part large enough of $\Sigma=(0,T)\times \partial\Omega$. Eventually, the issue of the approximation of controllability problems for the wave equation through non conformal spacetime approach can very likely be addressed as well: remember that the control of minimal $L^2(\mathcal{O}\times (0,T))$ norm for the initial data $(z_0,z_1)\in H^1_0(\Omega)\times L^2(\Omega)$ is given by $v=u 1_{\mathcal{O}}$ where $u$ together with $z$ is the saddle point of the following Lagrangian $\widehat{\mathcal{L}}: V\times L^2(0,T; H^1_0(\Omega))\to \mathbb{R}$
$$
\begin{aligned}
\widehat{\mathcal{L}}(u,z)= \frac{1}{2}\Vert u\Vert^2_\mathcal{O}&+ \frac{\gamma}{2}\Vert \Box u\Vert^2_{L^2(0,T;H^{-1}(\Omega))}-\int_0^T (z,\Box u)_{H_0^1(\Omega),H^{-1}(\Omega)} dt\\
&+ \langle u_t(\cdot,0),z_0\rangle_{H^{-1}(\Omega),H_0^1(\Omega)}-\langle u(\cdot,0),z_1\rangle_{L^2(\Omega)},
\end{aligned}
$$
very close to (\ref{tildeL}).  We refer to \cite{CCM14}.

\bibliographystyle{alpha}
\bibliography{BiblioE}


\ifdraft{
\listoftodos
}{}

\end{document}